\documentclass[final,nomarks]{dmtcs-episciences}
\usepackage[utf8]{inputenc}
\usepackage{amsmath}
\usepackage{amssymb}
\usepackage{amsthm}
\usepackage{graphicx}
\usepackage{hyperref}
\usepackage{color}
\usepackage{numprint}
\npthousandsep{\ensuremath\,\allowbreak}

\usepackage[round]{natbib}

\newtheorem{thm}{Theorem}[section]
\newtheorem{prop}[thm]{Proposition}
\newtheorem{lem}[thm]{Lemma}
\newtheorem{coro}[thm]{Corollary}
\newtheorem{open}{Open question}
\theoremstyle{remark}
\newtheorem{rmk}[thm]{Remark}

\newcommand{\tdef}[1]{\textcolor{blue}{\emph{#1}}}

\newcommand{\sym}{\mathfrak{S}}
\newcommand{\cyc}[1]{\overline{#1}}
\newcommand{\pin}{\operatorname{Pin}}
\newcommand{\cycf}{\operatorname{cyc}}
\newcommand{\dyck}{\mathcal{D}}
\newcommand{\dyckc}{\operatorname{dc}}
\newcommand{\dycktype}{\mathfrak{D}}
\newcommand{\motz}{\mathcal{M}}
\newcommand{\motzf}{\mathfrak{M}}
\newcommand{\meand}{\mathcal{R}}
\newcommand{\ord}{\operatorname{ord}}
\newcommand{\ordset}{\mathcal{O}}

\title{Efficient recurrence for the enumeration of permutations with fixed pinnacle set}

\author{Wenjie Fang}
\affiliation{LIGM, Université Gustave Eiffel, CNRS, ESIEE Paris, 77454 Marne-la-Vallée, France}
\keywords{permutation, pinnacle set, enumeration, recurrence}
\received{2021-07-30}

\revised{2022-02-15}

\accepted{2022-02-15}

\publicationdetails{24}{2022}{1}{8}{8321}

\begin{document}

\maketitle

\begin{abstract}
Initiated by Davis, Nelson, Petersen and Tenner (2018), the enumerative study of pinnacle sets of permutations has attracted a fair amount of attention recently. In this article, we provide a recurrence that can be used to compute efficiently the number $|\sym_n(P)|$ of permutations of size $n$ with a given pinnacle set $P$, with arithmetic complexity $O(k^4 + k\log n)$ for $P$ of size $k$. A symbolic expression can also be computed in this way for pinnacle sets of fixed size. A weighted sum $q_n(P)$ of $|\sym_n(P)|$ proposed in Davis, Nelson, Petersen and Tenner (2018) seems to have a simple form, and a conjectural form is given recently by Flaque, Novelli and Thibon (2021+). We settle the problem by providing and proving an alternative form of $q_n(P)$, which has a strong combinatorial flavor. We also study admissible orderings of a given pinnacle set, first considered by Rusu (2020) and characterized by Rusu and Tenner (2021), and we give an efficient algorithm for their counting.
\end{abstract}

\section{Introduction}

Given a permutation $\pi \in \sym_n$ in one-line notation $\pi_1 \pi_2 \cdots \pi_n$, we consider its local maxima, \textit{i.e.}, elements $\pi_i$ for $2 \leq i \leq n-1$ such that $\pi_{i-1} < \pi_i > \pi_{i+1}$. In this case, the index $i$ is called a \tdef{peak} of $\pi$, and the element $\pi_i$ is called a \tdef{pinnacle}. We denote by $[n]$ the set $\{1, 2, \ldots, n\}$. The \tdef{pinnacle set} of a permutation $\pi$, denoted by $\pin(\pi)$, is the set of pinnacles of $\pi$. For $P \subseteq [n]$, we denote by $\sym_{n}(P)$ the set of permutations with pinnacle set $P$. Peaks of permutations have already been much studied, partly due to its link to the algebraic aspect of the symmetric group through the peak algebra. Inspired by the studies on peaks, pinnacle sets of permutations were first explored in \cite{pin-2018}, and various results were given there.

One of the main questions asked in \cite{pin-2018} is how to compute $|\sym_n(P)|$ efficiently. Two recurrences were proposed there, but both need super-exponential time in the number of pinnacles to compute. In \cite{pin-2021}, an improved formula was given, but it is still a sum of super-exponentially many terms. A formula with only exponentially many terms was given recently in \cite{pin-sagan} using the inclusion-exclusion principle. Then, a new and simpler recurrence was proposed in \cite{pin-marne} that drastically improves the arithmetic complexity, \textit{i.e.}, the number of arithmetic operations, to $O(\max(P)|P|^2)$, but there is still a dependency on the values in $P$, thus not polynomial in the bit-length of input. In this article, we again improve on the arithmetic complexity, giving a recurrence (see Theorem~\ref{thm:weight-rec}) that can be used to compute $|\sym_n(P)|$ with polynomial arithmetic complexity in the bit-length of input.

\begin{prop}\label{prop:algo}
  There is an algorithm that computes $|\sym_n(P)|$ with $O(|P|^2 \log n + |P|^4)$ arithmetic operations.
\end{prop}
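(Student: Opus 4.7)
The plan is to use the recurrence of Theorem~\ref{thm:weight-rec} as the engine of a left-to-right scan over the integers $1, 2, \ldots, n$ that maintains a state vector of dimension $O(|P|)$. Writing $k = |P|$ and $P = \{p_1 < p_2 < \cdots < p_k\}$, each value is processed by either a \emph{pinnacle step} (if it lies in $P$; there are $k$ such) or a \emph{gap step} (otherwise; there are $n-k$ such, grouped into $k+1$ runs of consecutive non-pinnacles). A naive iteration performs $n$ steps, for a total cost of $\Omega(nk)$ or worse; the whole point of the algorithm is to replace a run of $g$ consecutive gap steps by a single precomputed update of cost $O(k \log g)$.

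To achieve this, I would extract from Theorem~\ref{thm:weight-rec} the explicit action of the gap step on the state vector and argue that, between two consecutive pinnacles, it has a structurally very simple form -- ideally diagonal, so that coordinate $j$ is scaled by a scalar $\alpha_j$ depending only on $j$ and on the number of pinnacles already placed. Iterating $g$ times then amounts to replacing each $\alpha_j$ by $\alpha_j^g$, computed in $O(\log g)$ arithmetic operations by fast exponentiation. Summed over the $k+1$ gaps, and using $\sum_i g_i \leq n$, this contributes $O(k^2 \log n)$ operations. The remaining cost is concentrated in the $k$ pinnacle steps: each is allowed to perform a nontrivial combinatorial rearrangement of the state (e.g.\ a convolution on its coordinates), implemented in $O(k^3)$ operations, giving the additive $O(k^4)$ term and the announced bound $O(|P|^2 \log n + |P|^4)$.

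The main obstacle will be establishing this diagonal (or otherwise rapidly iterable) form of the gap step. If the recurrence only yields a triangular update, I would diagonalise it once, at cost $O(k^3)$, and then iterate in the eigenbasis without losing asymptotically. If the gap action turned out to be a fully general linear map on the state vector, naive matrix exponentiation would cost $O(k^3 \log n)$ per gap and break the bound, so one would need to exhibit extra structure -- such as a rank-one perturbation of a diagonal operator, amenable to a Sherman--Morrison-type closed form -- to preserve the $O(k^2 \log n)$ target. In all cases, correctness of the algorithm reduces immediately to Theorem~\ref{thm:weight-rec}, and the whole content of the proposition is this complexity analysis.
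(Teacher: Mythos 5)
Your overall accounting---$O(|P|^2\log n)$ for the gap contributions via fast exponentiation plus $O(|P|^4)$ for the per-pinnacle combinatorics---is exactly the paper's, but your argument is left conditional at its decisive point, and that point is in fact already settled by the theorem you invoke. The recurrence of Theorem~\ref{thm:weight-rec} does not scan over the values $1,\dots,n$ at all: it is indexed by the $k$ up steps only (the table $c_{n,P}(i,j)$ with $0\le j\le i\le k$ has $O(k^2)$ entries), and each run of $g$ consecutive non-pinnacle values has already been compressed into the single scalar $f(d,\ell,g)$ of \eqref{eq:drun-weight}. Concretely, $f(d,\ell,g)$ is (up to the transfer of down-step weights to up steps and a factor $2^{g-d}$ extracted globally as $2^{n-1-2k}$) the $(\ell,\ell-d)$ entry of the $g$-th power of the bidiagonal one-step gap operator you describe---a value can only extend a segment (horizontal step, diagonal weight) or merge two segments (down step, subdiagonal weight)---and the closed form $\sum_{m=0}^{d}(-1)^m(\ell+1-m)^{g}/(m!(d-m)!)$ is precisely the ``diagonalization'' you hope for: it is Lemma~\ref{lem:sum-power}, i.e.\ Lagrange interpolation applied to $h_{g-d}(\ell+1,\ell,\dots,\ell-d+1)$. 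So the ``main obstacle'' you defer (diagonal vs.\ triangular vs.\ fully general) does not need to be re-established inside the proof of Proposition~\ref{prop:algo}; had you needed to re-derive it, your triangular case is the one that occurs, and your fallback would work.

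What then remains, and what the paper actually does, is only the operation count: the recurrence has $O(k^2)$ entries, each a sum of $O(k)$ terms, each term requiring one evaluation of $f$, itself a sum of $O(k)$ summands---hence $O(k^4)$ summands in total. The only danger is that each summand contains a power $(\ell+1-m)^{g}$ with $g\le n$, which naively costs $O(\log n)$ and would give $O(k^4\log n)$. The fix is a precomputation: the bases satisfy $1\le \ell+1-m\le k+1$ and the exponents range over the $k+1$ gap values only, so all needed powers are computed once by fast exponentiation in $O(k^2\log n)$ operations (and the factorials in $O(k)$), after which each summand costs $O(1)$. Your per-gap ``$O(k\log g)$ update'' is a reorganization of that same $O(k^2\log n)$ work, and your $O(k^3)$ per pinnacle step matches the $O(k^3)$ cost per level $i$ of the table. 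In short: right idea and right bound, but as written the proof is a plan whose central structural claim is flagged rather than proved; once you read \eqref{eq:drun-weight} as the already-exponentiated gap operator, the proof collapses to the counting argument above.
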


Using the same recurrence in Theorem~\ref{thm:weight-rec}, through a computer algebra system, we also obtain general symbolic expressions of $|\sym_n(P)|$ for $P$ of arbitrary fixed size, extending formulas given in \cite{pin-2018} for $|P|=1,2$. However, these expressions are messy in general. In \cite{pin-2018}, it was suggested that the following weighted sum may have a simpler form:
\begin{equation}
  \label{eq:q-def}
  q_n(P) = \sum_{Q \subseteq P} 2^{|Q|} |\sym_n(Q)|.
\end{equation}
It is also observed in \cite{pin-2018} that we may use $q_n(P)$ to recover $|\sym_n(P)|$ with the inclusion-exclusion principle. In \cite{pin-marne}, a general process of generating an expression of $q_n(P)$ was conjectured, with a few examples of such expressions for small $|P|$, which are indeed simpler. We settle this problem by providing an alternative form of $q_n(P)$ as a summation over some combinatorial objects. Our formula is equivalent to the one conjectured in \cite{pin-marne} (see Remark~\ref{rmk:q-conj}).

For $s \leq k$, let $\meand_{k,s}$ be the set of sequences $r = (r_0, r_1, \ldots, r_k)$ with the conditions
\begin{itemize}
\item $r_0 = 0$, $r_k = s$, $r_i \geq 0$ for all $0 \leq i \leq k$;
\item $r_i = r_{i-1} \pm 1$ for all $1 \leq i \leq k$.
\end{itemize}
We can interpret $\meand_{k,s}$ as the set of $y$-coordinate sequences of Dyck meanders of length $k$ terminating on $y = s$. We define $\meand_k = \cup_{s = 0}^k \meand_{k,s}$.

\begin{thm}[See also Conjecture~5.1 in \cite{pin-marne}]\label{thm:q}
  For $n \geq 1$ and $P = \{p_1 > p_2 > \cdots > p_k\} \subseteq [n]$, we take the convention that $p_0 = n+1$ and $p_{k+1} = 1$. Given $r \in \mathcal{R}_{k}$, we define its weight $w_\meand(r)$ by
  \[
    w_\meand(r) = \prod_{m=0}^k (r_m + 1)^{p_m-p_{m+1}}.
  \]
  Then we have
  \[
    q_n(P) = 2^{n-k-1} \sum_{r \in \mathcal{R}_k} w_\meand(r).
  \]
\end{thm}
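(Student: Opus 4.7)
The plan is to derive the formula in Theorem~\ref{thm:q} from the recurrence in Theorem~\ref{thm:weight-rec} by exhibiting the right-hand side as the natural transfer-matrix unfolding of that recurrence. I first rewrite
\[
  q_n(P) = |\{(\pi, S) : S \subseteq \pin(\pi) \subseteq P\}|,
\]
so that $q_n(P)$ counts pairs consisting of a permutation whose pinnacle set lies in $P$, together with a marked subset $S$ of its pinnacles; the factor $2^{|Q|}$ in~(\ref{eq:q-def}) becomes the choice of~$S$. This reformulation suggests that the meander should encode the interaction between the pinnacle structure of $\pi$ and the marking $S$.

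On the right-hand side, the meander sum factors through a transfer matrix. Setting
\[
  A_m(s) = \sum_{r \in \meand_m,\, r_m = s} \prod_{i=0}^{m} (r_i+1)^{p_i - p_{i+1}},
\]
one reads off $A_0(s) = [s = 0]$ and the recursion
\[
  A_m(s) = (s+1)^{p_m - p_{m+1}} \bigl( A_{m-1}(s-1) + A_{m-1}(s+1) \bigr),
\]
so that the claim is equivalent to $q_n(P) = 2^{n-k-1} \sum_{s \geq 0} A_k(s)$. My plan is then to extract the same transfer-matrix recursion on the $q_n(P)$ side: apply Theorem~\ref{thm:weight-rec} to each $|\sym_n(Q)|$ in~(\ref{eq:q-def}), sum over $Q \subseteq P$ weighted by $2^{|Q|}$, and reorganise the result as $2^{n-k-1}$ times a vector-matrix product whose factors match those of the $A_m$-recursion. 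The base case $k = 0$ is immediate: $q_n(\emptyset) = |\sym_n(\emptyset)| = 2^{n-1}$ agrees with $2^{n-1} \cdot A_0(0) = 2^{n-1}$.

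The main obstacle will be identifying the permutation-side statistic that plays the role of the meander height $s$. My expectation is that $s$ counts the number of \emph{fragments} (maximal blocks of consecutive positions holding values already revealed) in the partial permutation obtained by reading values in decreasing order down to a threshold related to $p_m$, normalised so that $s = 0$ at the start. Under this interpretation, the factor $(s+1)^{p_m - p_{m+1}}$ counts the placements of the $p_m - p_{m+1}$ values lying in $[p_{m+1}, p_m)$, each going into one of $s+1$ fragment-bounded slots; the $\pm 1$ meander step encodes a binary decision for $p_m$ (whether it is actually a pinnacle of $\pi$, and, if so, whether it belongs to $S$); and the external factor $2^{n-k-1}$ collects the remaining binary ``left-versus-right'' choices that do not alter $s$. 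Making this bookkeeping precise, and verifying that the induced recursion on the $q_n(P)$ side coincides term-by-term with that satisfied by the $A_m$, is the technical heart of the proof.
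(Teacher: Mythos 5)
Your setup is sound as far as it goes: the reformulation of $q_n(P)$ as counting pairs $(\pi,S)$ with $S \subseteq \pin(\pi) \subseteq P$ is correct (it is exactly the $2^{|M|_{\mathrm{up}}}$ weighting used in the paper's Proposition~\ref{prop:q-rec}), the transfer-matrix recursion for $A_m(s)$ is correct, and the base case checks out. But what you have written is a plan, not a proof: the entire content of the theorem lies in the step you defer to the end, namely verifying that the recursion induced on the $q_n(P)$ side ``coincides term-by-term'' with the $A_m$-recursion. That verification is where all the work is, and the specific route you propose for it would fail.

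The failure point is your identification of the meander height $s$ with the number of fragments (segments) of the partial permutation. The fragment count at threshold $p_m$ is the height of the Motzkin/Dyck prefix, and the correct relation between the partial sums on the permutation side and the meander sums is \emph{not} height-to-height: the paper's closed form \eqref{eq:q-simple} for the partial quantity $a_{n,P}(i,j)$ (prefixes ending at fragment height $i-j$) is a mixture $2^{-i}\sum_{s=i-j}^{i} \frac{s!}{(s-i+j)!}\sum_{r\in\meand_{i,s}} w_\meand(r)$ over \emph{all} meander heights $s \geq i-j$, with falling-factorial coefficients. Relatedly, inserting the $p_m-p_{m+1}-1$ intermediate values into fragments does not give a pure power $(s+1)^{p_m-p_{m+1}}$: each value can attach to either end of a fragment, create a new fragment, or merge two, so the fragment count varies during the insertion and the actual count is the complete homogeneous symmetric function $h_{g-d}(\ell+1,\ell,\dots)$ of \eqref{eq:drun-weight}, times separate factors for merges. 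Converting these $h$'s into pure powers forces signed identities (Lemma~\ref{lem:sum-power}, Proposition~\ref{prop:binom-poly}) and a delicate telescoping of positive and negative contributions; there is no known sign-free bookkeeping of the kind you describe, and indeed the paper poses the existence of a combinatorial proof as an open question. Your ``binary decision for $p_m$'' also does not match a $\pm 1$ meander step, since there are three outcomes for $p_m$ (not a pinnacle, a pinnacle outside $S$, a pinnacle in $S$). To complete the argument along your lines you would need to guess and prove by induction a closed form for the partial sums of the type \eqref{eq:q-simple}, which is the heavy computation the paper actually carries out.
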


Although the statement of Theorem~\ref{thm:q} is purely combinatorial, our proof uses heavy computations. We thus ask naturally for a more satisfying combinatorial proof.

For permutations with the same pinnacle set, it is possible that their pinnacles appear in different orders, and not all orders are possible. A pinnacle order is called \emph{admissible} if there is a permutation in which these pinnacles appear in that order. Given a pinnacle set $P$, we denote by $\ordset(P)$ the set of all admissible pinnacle orders of $P$ (its formal definition is postponed to Section~\ref{sec:pin-order}). Admissible pinnacle order was first considered in \cite[Question~3]{rusu-sorting}. A characterization of $\ordset(P)$ was given in Theorem~3.6 of \cite{rusu-tenner} using the language of interruptions. In the same paper, the authors asked for a function that computes $|\ordset(P)|$. As a response, a formula of $|\ordset(P)|$ was given in \cite{pin-sagan}, but it involves exponentially many terms in the size of $P$. In the article, we provide an efficient way to compute $|\ordset(P)|$ using a recurrence.

\begin{prop}\label{prop:pin-order-algo}
  There is an algorithm that computes $|\ordset(P)|$ with $O(|P|^2)$ arithmetic operations.
\end{prop}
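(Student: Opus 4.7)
The plan is to exploit the Rusu--Tenner characterization of admissible orderings (Theorem~3.6 of \cite{rusu-tenner}), which describes them via local interruption patterns, to design an incremental dynamic program. Writing $P = \{p_1 > p_2 > \cdots > p_k\}$, I would process the pinnacles in decreasing order of value, building an admissible ordering by successive insertions of the next-smallest element into a currently admissible partial ordering.

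The crucial reduction I would aim to prove is that, at each stage, the set of positions at which the next pinnacle $p_{i+1}$ can legally be inserted into an admissible ordering of $\{p_1, \ldots, p_i\}$ is controlled entirely by a single integer parameter $s$, namely the number of legal insertion slots. This should follow from the interruption characterization: since $p_{i+1}$ is strictly smaller than every previously placed element, the global admissibility condition collapses, for the insertion step, to a purely structural requirement on the slot chosen, and each choice of slot updates $s$ by a bounded amount (the inserted element contributes at most a constant change to the number of available future slots).

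Granting that reduction, define $f(i, s)$ to be the number of admissible orderings of the $i$ largest pinnacles carrying exactly $s$ legal insertion slots. Because the transition kernel has bandwidth $O(1)$ in $s$ and $s$ ranges over $O(k)$ values, each layer $f(i+1, \cdot)$ is computed from $f(i, \cdot)$ in $O(k)$ arithmetic operations, and $|\ordset(P)| = \sum_s f(k, s)$ is obtained after $k$ layers, giving the claimed $O(k^2)$ bound. (A marginally different bookkeeping, in which the state records e.g.\ the number of "left-exposed'' positions, may be needed; the point is that a single linear-size statistic suffices.)

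The main obstacle is the first step: rigorously translating the Rusu--Tenner interruption criterion, which is phrased globally on the full ordering, into the local transition rule required by the DP, and verifying that admissibility of all future insertions is genuinely captured by the single integer $s$ rather than by some richer combinatorial invariant of the partial ordering. Once this is established, the algorithm, its correctness and its arithmetic complexity follow immediately.
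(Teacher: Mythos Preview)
Your proposal is a plan, not a proof: you explicitly flag the key reduction---that the Rusu--Tenner interruption criterion collapses to a single-integer state under insertion of the smallest remaining pinnacle---as ``the main obstacle'', and you do not carry it out. Until that reduction is established, there is no algorithm to analyse. Two concrete worries: (a) ``admissible ordering of $\{p_1,\ldots,p_i\}$'' is ambiguous---admissible as a pinnacle set in its own right, or extendable to an admissible ordering of all of $P$? These differ, because the constraint on where $p_{i+1}$ can go depends on the still-to-come values $p_{i+2},\ldots,p_k$ (through how many non-pinnacle elements are available below $p_{i+1}$). Your DP as written does not say where this dependence on the tail of $P$ enters. (b) Taking the state to be literally ``the number of legal insertion slots'' is unlikely to work: two partial orderings can have the same number of legal slots for $p_{i+1}$ yet behave differently for $p_{i+2}$; you need a structural invariant of the partial ordering, not a transition count.

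The paper does \emph{not} go through Rusu--Tenner at all. It builds its own characterization (Theorem~\ref{thm:ordset-chara}): $\sigma\in\ordset(P)$ iff the Motzkin type of $\cycf(\hat\sigma)$ stays below a ceiling determined by the \emph{maximal Dyck type} $D_P$, where the ceiling at step $i$ is $\ell_i=\min(\ell_{i-1}+1,\,p_i-3-2(k-i))$. The single integer state is then the current height $j$ of the Motzkin prefix (equivalently, one less than the number of maximal ``segments'' of already-placed elements), and the dependence on the values in $P$ enters precisely through the ceilings $\ell_i$. The recurrence (Proposition~\ref{prop:ordset-rec}) is just weighted Motzkin-prefix counting under this ceiling, with the Fran\c con--Viennot weights $1$, $2(j{+}1)$, $(j{+}1)(j{+}2)$; this gives $O(k^2)$ immediately. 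So your intuition that one integer suffices is correct, but the right integer is the segment count, not the slot count, and the admissibility constraint from $P$ shows up as an $i$-dependent upper bound on that integer---both of which your proposal leaves unspecified.
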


This article is organized as follows. Section~\ref{sec:motz} reduces the counting of $\sym_n(P)$ to that of some weighted Motzkin paths, using a variant of the Françon-Viennot bijection (Proposition~\ref{prop:sum-type}). It is then further reduced in Section~\ref{sec:dyck} to the counting of appropriately weighted Dyck paths by compressing horizontal steps in the weighted Motzkin paths (Theorem~\ref{thm:dyck-weight}). Given the formulation in Dyck paths, we provide in Section~\ref{sec:rec} the main result of this article (Theorem~\ref{thm:weight-rec}), a recurrence that allows us to compute $|\sym_n(P)|$ efficiently, as in Proposition~\ref{prop:algo}. Using a variant of this recurrence, we provide in Section~\ref{sec:q} a proof of Theorem~\ref{thm:q}. We then deal with the enumeration of admissible pinnacle orderings in Section~\ref{sec:pin-order}.

\paragraph{Acknowledgment} We would like to thank Jean-Christophe Novelli for bringing this subject to the attention of the author, for interesting discussions and for advice on the draft of this article. We would also like to thank \'Eric Fusy for pointing out the link between the construction in Section~\ref{sec:motz} and the Françon-Viennot bijection in \cite{francon-viennot, flajolet}. This work is not supported by any funding with precise predefined goal, but it is supported by the publicly funded laboratory LIGM of Université Gustave Eiffel.

\section{Construction of permutations with a fixed pinnacle set} \label{sec:motz}

In the following, we consider \tdef{cyclic permutations}, which are equivalent classes of permutations under the action of position-shifting $\pi \mapsto \pi'$ with $\pi'_i = \pi_{(i \mod n) + 1}$ for $\pi \in \sym_n$. Cyclic permutations are marked with a bar, and we write its one-line notation ending with the largest element. For instance, the cyclic permutation $\cyc{\pi} = \cyc{42135}$ is the equivalent class $\{42135, 21354, 13542, 35421, 54213 \}$. We denote by $\cyc{\sym_{n+1}}$ the set of cyclic permutations with $n+1$ elements.

The pinnacles of a cyclic permutation $\cyc{\pi}$ is defined in the same way as for normal permutations, up to cyclic index, and we also define $\cyc{\sym_{n+1}(P)}$ analogously. We observe that $(n+1)$ is always a pinnacle of $\cyc{\pi} \in \cyc{\sym_{n+1}}$. We denote by $\cycf(\pi)$ the \tdef{cyclic completion} of $\pi \in \sym_n$, which is in $\cyc{\sym_{n+1}}$ and whose one-line notation is obtained by adding $(n+1)$ at the end of that of $\pi$. For instance, $\cycf(2413) = \cyc{24135}$. It is clear that $\cycf$ is a bijection between $\sym_n$ and $\cyc{\sym_{n+1}}$. Furthermore, it also preserves $\pin(\pi)$ in the following sense.

\begin{lem}[Lemma~4.2 in \cite{pin-sagan}] \label{lem:cyc}
  Given $\pi \in \sym_n$, we have $\pin(\cycf(\pi)) = \pin(\pi) \cup \{ n+1 \}$. In other words, $\cycf$ is a bijection from $\sym_n(P)$ to $\cyc{\sym_{n+1}(P \cup \{ n+1 \})}$.
\end{lem}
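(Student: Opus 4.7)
The plan is to treat the statement as a direct case analysis on positions, since the map $\cycf$ only appends one letter. The earlier paragraph already asserts that $\cycf\colon \sym_n \to \cyc{\sym}_{n+1}$ is a bijection, so the only substantive content is the identity $\pin(\cycf(\pi)) = \pin(\pi) \cup \{n+1\}$. Once this is established, restricting $\cycf$ to pinnacle fibers immediately yields the bijection $\sym_n(P) \to \cyc{\sym}_{n+1}(P \cup \{n+1\})$.

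First I would fix $\pi \in \sym_n$ and write $\cycf(\pi) = \pi_1 \pi_2 \cdots \pi_n (n+1)$, interpreted cyclically with indices modulo $n+1$. Then I would compare the ``local maximum'' condition at each position $i \in \{1, 2, \ldots, n+1\}$ between $\pi$ and $\cycf(\pi)$. The argument splits into four regions. For $2 \le i \le n-1$, the two neighbors $\pi_{i-1}$ and $\pi_{i+1}$ are the same in both contexts, so position $i$ is a pinnacle of $\cycf(\pi)$ if and only if it is a pinnacle of $\pi$. The position carrying the value $n+1$ (position $n+1$ in $\cycf(\pi)$) is automatically a pinnacle because $n+1$ is strictly larger than both cyclic neighbors $\pi_n$ and $\pi_1$; this contributes the ``$\cup \{n+1\}$'' part.

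The two remaining boundary positions have to be dismissed. Position $n$ in $\cycf(\pi)$ has right neighbor $n+1 > \pi_n$, so $\pi_n$ is not a local maximum there; and in $\pi$ itself position $n$ is not a pinnacle by the $2 \le i \le n-1$ convention, so neither side contributes. Similarly, position $1$ in $\cycf(\pi)$ has left cyclic neighbor $n+1 > \pi_1$, so it is not a pinnacle, matching the fact that position $1$ is never a pinnacle of $\pi$. Combining the four cases gives exactly $\pin(\cycf(\pi)) = \pin(\pi) \cup \{n+1\}$.

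There is no real obstacle: the proof is a bookkeeping exercise. The only point that needs a moment of care is making sure the cyclic indexing for $\cyc{\sym}_{n+1}$ is applied consistently at the two boundary positions $1$ and $n$, where the new element $n+1$ becomes a neighbor; once this is handled, every other position is either inherited unchanged or trivially dominated by $n+1$.
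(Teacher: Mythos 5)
Your proof is correct. Note that the paper does not actually prove this lemma---it is quoted from Lemma~4.2 of the cited reference---so there is no in-paper argument to compare against; your direct case analysis on the four position regions (interior positions unchanged, the new maximum $n+1$ always a cyclic pinnacle, and the two boundary positions $1$ and $n$ dominated by their new neighbor $n+1$) is exactly the standard verification one would write, and the restriction to fibers then gives the stated bijection since $\cycf$ is already known to be a bijection $\sym_n \to \cyc{\sym_{n+1}}$.
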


Given $\cyc{\pi} \in \cyc{\sym_{n+1}}$ and $2 \leq \ell \leq n+1$, we define the \tdef{segment set} $S_\ell(\cyc\pi)$ of level $\ell$ as the list of maximal consecutive segments (in the cyclic sense) in $\cyc{\pi}$ whose elements are at least $\ell$. We denote by $s_\ell(\cyc{\pi}) = |S_\ell(\cyc{\pi})|$. For instance, $S_4(\cyc{435281679})$ consists of the segments $5, 8, 6794$ with all elements at least $4$, and we thus have $s_4(\cyc{435281679}) = 3$. It is clear that $s_{n+1}(\cyc{\pi}) = s_2(\cyc{\pi}) = 1$.

We now define the \tdef{Motzkin type} of $\cyc{\pi}$, denoted by $\motzf(\cyc{\pi})$, as the lattice path of length $n-1$ whose $i$-th step is $(1, s_{n+1-i}(\cyc{\pi}) - s_{n+2-i}(\cyc{\pi}))$, for $i \in [n-1]$. In other words, $\motzf(\cyc{\pi})$ is the lattice path through the points $(i, s_{n+1-i}(\cyc{\pi}) - 1)$ for $0 \leq i \leq n-1$. Figure~\ref{fig:motzkin-type} shows a cyclic permutation and its Motzkin type, along with all $S_\ell(\cyc{\pi})$.

\begin{figure}[!thbp]
  \centering
  \includegraphics[page=1,width=0.6\textwidth]{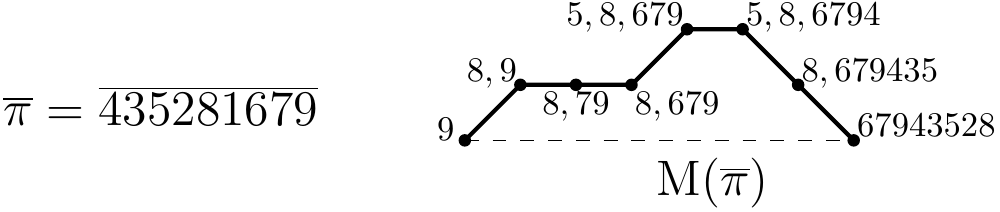}
  \caption{Example of a cyclic permutation, its Motzkin type and segment sets}
  \label{fig:motzkin-type}
\end{figure}

We recall that a \tdef{Motzkin path} of length $n$ is a lattice path starting at $(0,0)$, ending at $(n,0)$, composed by up steps $(1,1)$, horizontal steps $(1,0)$ and down steps $(1,-1)$, while always staying weakly above the $x$-axis. We denote by $\motz_{n-1}$ the set of Motzkin paths of length $n-1$. The name ``Motzkin type'' comes from the following lemma.

\begin{lem}\label{lem:motzkin-type}
  For $\cyc\pi \in \cyc{\sym_{n+1}}$, the lattice path $\motzf(\cyc{\pi})$ is a Motzkin path. Furthermore, the $i$-th step of $\motzf(\cyc{\pi})$ is an up step if and only if $n+1-i$ is a pinnacle of $\cyc{\pi}$.
\end{lem}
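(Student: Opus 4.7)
The plan is to analyze how the segment set $S_\ell(\cyc{\pi})$ evolves as $\ell$ decreases by~$1$, since the $i$-th step of $\motzf(\cyc{\pi})$ records exactly the change $s_{n+1-i}(\cyc{\pi}) - s_{n+2-i}(\cyc{\pi})$, which corresponds to inserting the element $\ell = n+1-i$ into the picture at level $\ell$. So the core of the proof is a local case analysis on the two cyclic neighbors of $\ell$ in $\cyc{\pi}$.

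First I would classify the insertion of $\ell$ into the segments at level $\ell+1$ according to whether its cyclic neighbors $a,b$ in $\cyc{\pi}$ satisfy $a,b > \ell$, exactly one is $> \ell$, or both are $< \ell$. In the first case, $a$ and $b$ lie in two \emph{distinct} maximal segments at level $\ell+1$ (since they were separated by $\ell$, which was absent), and adding $\ell$ merges them, giving $s_{\ell} = s_{\ell+1} - 1$; this is precisely the situation where $\ell$ is a valley of $\cyc{\pi}$. In the second case, $\ell$ extends one existing segment by one element, so $s_{\ell} = s_{\ell+1}$. In the third case, $\ell$ is a local maximum, i.e., a pinnacle of $\cyc{\pi}$, and $\{\ell\}$ appears as a new isolated segment, giving $s_{\ell} = s_{\ell+1} + 1$. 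This gives directly that each step of $\motzf(\cyc{\pi})$ has vertical change in $\{-1,0,+1\}$ and that the $i$-th step is an up step if and only if $n+1-i$ is a pinnacle of $\cyc{\pi}$.

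Next I would check the boundary and positivity conditions. Since $n+1 \in \cyc{\pi}$, we have $s_{n+1}(\cyc{\pi}) = 1$ (the singleton segment $\{n+1\}$), so $\motzf(\cyc{\pi})$ starts at height $0$. At the other end, all elements of $\cyc{\pi}$ except possibly $1$ are at level $\geq 2$, so removing $1$ from the cyclic sequence leaves a single maximal segment and $s_2(\cyc{\pi}) = 1$, giving final height $0$. For any intermediate $\ell$ with $2 \leq \ell \leq n+1$, the element $n+1$ belongs to some segment at level $\ell$, so $s_\ell(\cyc{\pi}) \geq 1$ and the path stays weakly above the $x$-axis. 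Combining with the $\pm 1, 0$ step analysis, $\motzf(\cyc{\pi})$ is a Motzkin path.

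The main point where care is required is the merging case: one must check that the two neighbors of $\ell$ really do lie in different maximal segments at level $\ell+1$, and not in the same one wrapping around cyclically. This is where the cyclic setting (where $n+1$ is present as an anchor) matters: because $n+1 \neq \ell$ lies in some segment at level $\ell+1$, the absence of $\ell$ genuinely breaks the cyclic sequence of elements $> \ell$ into two distinct maximal arcs around $\ell$. Apart from this subtlety, the argument is a straightforward local inspection.
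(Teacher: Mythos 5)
Your proof follows essentially the same route as the paper's: a local case analysis on how many cyclic neighbors of $\ell = n+1-i$ exceed $\ell$, which yields the step increments $+1,0,-1$ and the pinnacle characterization, plus the boundary checks $s_{n+1}=s_2=1$ and $s_\ell\geq 1$. One small correction to the point you rightly flag as delicate: in the merging case, the reason the two neighbors lie in \emph{distinct} maximal segments at level $\ell+1$ is not that $n+1$ belongs to some segment (that is compatible with both neighbors sharing a segment); it is that $\ell \geq 2$, so the element $1 < \ell$ sits elsewhere in the cycle and breaks the arc of elements $\geq \ell+1$ complementary to $\ell$ --- this is exactly the paper's remark ``as $j \geq 2$''. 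With that fix the argument is complete.
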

\begin{proof}
  We only need to show that $s_j(\cyc\pi) - s_{j+1}(\cyc\pi) \in \{-1, 0, 1\}$ for $j \geq 2$. The element $j$ in $\cyc\pi$ can only be in three cases:
  \begin{itemize}
  \item $j$ is not next to any element at least $j+1$, in this case $s_j(\cyc\pi) - s_{j+1}(\cyc\pi) = 1$, as we have the new segment $j$;
  \item $j$ is next to only one element $e$ at least $j+1$, in this case $s_j(\cyc\pi) - s_{j+1}(\cyc\pi) = 0$, as $j$ joins the segment of $e$;
  \item $j$ is next to two elements $e_1, e_2$ at least $j+1$, in this case $s_j(\cyc\pi) - s_{j+1}(\cyc\pi) = -1$. It is because $e_1$ and $e_2$ come from two different segments, as $j \geq 2$, and then $j$ fusions these two segments.
  \end{itemize}
  We thus conclude that $\motzf(\cyc{\pi})$ is a Motzkin path. For the second part, we observe that an up step means the creation of a new segment, which is equivalent to the new element being a pinnacle.
\end{proof}

Given a Motzkin path $M$, we define a weight on its steps. Up steps have weight $1$, horizontal steps on height $\ell$ have weight $2(\ell+1)$, and down steps from height $\ell$ to $\ell-1$ have weight $\ell(\ell+1)$. The weight of $M$, denoted by $w_\motz(M)$, is the product of the weights of its steps.

\begin{prop} \label{prop:sum-type}
  Given a Motzkin path $M$ of length $n-1$, the number of cyclic permutations $\cyc{\pi} \in \cyc{\sym_{n+1}}$ with $\motzf(\cyc{\pi}) = M$ is $w_\motz(M)$.
\end{prop}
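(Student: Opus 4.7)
The plan is to prove Proposition~\ref{prop:sum-type} by setting up a bijection between cyclic permutations $\cyc\pi \in \cyc{\sym_{n+1}}$ with $\motzf(\cyc\pi) = M$ and labeled versions of $M$, in the spirit of the Fran\c{c}on--Viennot bijection alluded to in the Acknowledgment. Each step of $M$ is to carry a label drawn from a set whose size equals the step's weight in $w_\motz$, so that the total number of labeled paths with underlying path $M$ is exactly $w_\motz(M)$.

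To produce the label sequence from $\cyc\pi$, I would process elements in decreasing order from $n+1$ down to $2$. By Lemma~\ref{lem:motzkin-type}, the step of $M$ at position $i$ is up, horizontal, or down according as $j = n+1-i$ is a pinnacle, a ``mixed'' element (one cyclic neighbor larger, one smaller), or a valley of $\cyc\pi$; in the valley case a short argument rules out the degenerate ``same segment'' subcase, so valleys always correspond to down steps. For an up step, no label is needed, matching weight $1$. For a horizontal step at height $\ell$, the label is a pair $(S,e)$, where $S$ is the segment at level $j+1$ containing the larger cyclic neighbor of $j$ and $e \in \{L,R\}$ indicates on which end of $S$ that neighbor sits; this yields $2(\ell+1)$ options. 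For a down step from height $\ell$, the label is an ordered pair $(L,R)$ of distinct segments at level $j+1$, with $L$ (resp.\ $R$) containing the cyclic left (resp.\ right) neighbor of $j$; this yields $(\ell+1)\ell$ options. In the forward direction this rule is clearly well-defined, giving an injection $\cyc\pi \mapsto (M,L)$.

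The main obstacle is surjectivity: showing that every labeled path arises from some $\cyc\pi$. I would do this by describing an inverse reconstruction, building $\cyc\pi$ step by step while maintaining a partial cyclic arrangement of segments---horizontal- and down-step labels directly dictate how segments extend or merge, and the final element $1$ fills the unique remaining slot. The delicate point is that an up step creates a new singleton segment whose cyclic position is not fixed at the moment of creation; the constraints pinning down its position come only later, through the ordered-pair labels of subsequent down steps, which force specific cyclic adjacencies. Establishing consistency of the reconstruction, that every label sequence is realized, and hence that exactly $w_\motz(M)$ cyclic permutations have Motzkin type $M$, is the technical heart of the argument. An alternative and more mechanical route is induction on $n$ via insertion of $n+1$ into a cyclic permutation of $[n]$: with $m$ the larger of the two cyclic neighbors of $n+1$, one checks that $s_\ell(\cyc\pi) = s_\ell(\cyc\sigma) + \mathbb{1}[\ell > m]$, so the insertion prepends a step to $\motzf(\cyc\sigma)$ and possibly lowers one subsequent step, and the claim then reduces to a weight identity that can be verified using the explicit form of $w_\motz$.
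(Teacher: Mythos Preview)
Your approach is essentially the paper's: the same bijection between cyclic permutations of Motzkin type $M$ and label/choice sequences, one per step, with multiplicities matching the step weights. The paper presents it from the construction side and sidesteps your ``delicate point'' by never attempting to maintain a partial cyclic arrangement---the segment set is a free-floating unordered collection of oriented words throughout, and uniqueness follows because the sequence of choices determines every adjacent pair $(\pi(j),\pi(j+1))$ in which $n+1-i$ is the smaller element.
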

\begin{proof}
  We construct $\cyc\pi$ with $\motzf(\cyc\pi) = M$ step by step. We start from $S_{n+1} = \{(n+1)\}$, then we follow $M$ step by step, and construct the segment set $S_{n+1-i}$ by adding the element $n+1-i$ to $S_{n+2-i}$. There are three possible cases.
  \begin{itemize}
  \item The $i$-th step of $M$ is an up step. Thus, $S_{n+1-i}$ has one more segment than $S_{n+2-i}$. The only way to do is to add $n+1-i$ as a new segment, that is, $S_{n+1-i} = S_{n+2-i} \cup \{ (n+1-i) \}$.
  \item The $i$-th step of $M$ is a horizontal step of height $h$, meaning that $n+1-i$ joins one of the $h+1$ segments. There are $2(h+1)$ such possibilities, as $n+1-i$ can be attached to both ends of a segment.
  \item The $i$-th step of $M$ is a down step from height $h$ to $h-1$, meaning that $n+1-i$ joins two of the $h+1$ segments. There are $h(h+1)$ possibilities, $(h+1)$ for the left segment and $h$ for the right one.
  \end{itemize}
  To see that each sequence of possible choices gives a unique permutation $\pi$, we observe that $\pi$ is totally determined by all the pairs $(\pi(j), \pi(j+1))$. In the $i$-th step, we see that each choice determines uniquely the pairs containing $n+1-i$ as the smaller element, and the sequence of choices thus determines $\pi$ uniquely. Furthermore, each such permutation can be constructed in this way by looking at its segment sets. The number of choices for each step is exactly its weight. Thus, the number of permutations with Motzkin type $M$ is $w_\motz(M)$.
\end{proof}

Given a set $P \subseteq [n]$, we denote by $\motz_{n-1,P}$ the set of Motzkin paths $M$ of length $n-1$ such that, for each $p \in P$, the $(n+1-p)$-th step of $M$ is an up step, and these are the only up steps of $M$. We have the following corollary.

\begin{coro}\label{coro:motzkin-pin}
  For a set $P \subseteq [n]$, we have
  \[
    |\sym_{n}(P)| = \sum_{M \in \motz_{n-1,P}} w_\motz(M).
  \]
\end{coro}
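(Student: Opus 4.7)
The plan is to combine the three preceding results in a straightforward way. First, I would apply Lemma~\ref{lem:cyc} to rewrite
\[
  |\sym_n(P)| = |\cyc{\sym_{n+1}}(P \cup \{n+1\})|,
\]
so the problem reduces to counting cyclic permutations of $[n+1]$ whose pinnacle set is exactly $P \cup \{n+1\}$.

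Next, I would partition this set of cyclic permutations according to their Motzkin type. By Lemma~\ref{lem:motzkin-type}, $\motzf(\cyc{\pi})$ is a Motzkin path of length $n-1$ whose $i$-th step is an up step if and only if $n+1-i$ is a pinnacle of $\cyc{\pi}$. Since $n+1$ is automatically a pinnacle of every $\cyc{\pi} \in \cyc{\sym_{n+1}}$ (and does not correspond to any step of $\motzf(\cyc{\pi})$), the condition $\pin(\cyc{\pi}) = P \cup \{n+1\}$ is equivalent to requiring that the up steps of $\motzf(\cyc{\pi})$ occur exactly at positions $\{n+1-p : p \in P\}$, i.e., $\motzf(\cyc{\pi}) \in \motz_{n-1,P}$.

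Finally, I would apply Proposition~\ref{prop:sum-type}, which says that the number of cyclic permutations $\cyc{\pi}$ with $\motzf(\cyc{\pi}) = M$ equals $w_\motz(M)$. Summing over all $M \in \motz_{n-1,P}$ then gives
\[
  |\sym_n(P)| = \sum_{M \in \motz_{n-1,P}} w_\motz(M),
\]
as required.

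There is no real obstacle in this proof; the only subtle point is being careful with the index translation $p \mapsto n+1-p$ between pinnacles and step positions, and keeping track of the fact that the extra pinnacle $n+1$ introduced by $\cycf$ is not recorded as a step of the Motzkin path but rather as the starting position at height zero. Once these bookkeeping issues are handled, the corollary follows immediately from the chain Lemma~\ref{lem:cyc} $\to$ Lemma~\ref{lem:motzkin-type} $\to$ Proposition~\ref{prop:sum-type}.
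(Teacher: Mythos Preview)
Your proposal is correct and follows essentially the same route as the paper: apply Lemma~\ref{lem:cyc} to pass to cyclic permutations, use Lemma~\ref{lem:motzkin-type} and the definition of $\motz_{n-1,P}$ to identify which Motzkin types occur, and then invoke Proposition~\ref{prop:sum-type} to count the permutations above each type. The paper's proof is merely a terser version of exactly this argument.
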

\begin{proof}
  Let $P' = P \cup \{n+1\}$. From Lemma~\ref{lem:cyc}, we have $|\sym_{n}(P)| = |\cyc{\sym_{n+1, P'}}|$. We then conclude by Lemma~\ref{lem:motzkin-type} and the definition of $\motz_{n-1,P}$.
\end{proof}

As a corollary, we have a simple proof of the lower bound of $|\sym_n(P)|$ for $|P|=k$.

\begin{coro}[Proposition~3.13 in \cite{pin-2018}]\label{coro:lower-bound}
  Given $n \geq 2k+1$, for every $P \subseteq [n]$ with $|P|=k$ and $\sym_n(P) \neq \varnothing$, we have
  \[
    |\sym_n(P)| \geq 2^{n-k-1}.
  \]
  The lower bound is reached by $P_{\min} = \{3, 5, 7, \ldots, 2k+1\}$.
\end{coro}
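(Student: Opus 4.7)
My plan is to invoke Corollary~\ref{coro:motzkin-pin}, which rewrites
\[
  |\sym_n(P)| = \sum_{M \in \motz_{n-1,P}} w_\motz(M),
\]
and then to bound each summand individually. Since the hypothesis $\sym_n(P) \neq \varnothing$ implies $\motz_{n-1,P} \neq \varnothing$ and every weight is a positive integer, it will suffice to show that every term is at least $2^{n-k-1}$ and that this bound is attained for $P_{\min}$.

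Each $M \in \motz_{n-1,P}$ has exactly $k$ up steps (one per prescribed pinnacle), hence $k$ down steps (to return to height $0$) and $n-1-2k$ horizontal steps. Reading off the weights given just before Proposition~\ref{prop:sum-type}: each up step contributes $1$, each horizontal step at height $\ell \geq 0$ contributes $2(\ell+1) \geq 2$, and each down step from height $\ell \geq 1$ contributes $\ell(\ell+1) \geq 2$. Multiplying the contributions gives $w_\motz(M) \geq 1^k \cdot 2^k \cdot 2^{n-1-2k} = 2^{n-k-1}$, and summing over $M$ yields the desired inequality.

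For the equality case with $P_{\min} = \{3, 5, \ldots, 2k+1\}$, I would identify $\motz_{n-1, P_{\min}}$ explicitly. The prescribed up-step positions are $n-2, n-4, \ldots, n-2k$. Before the first up step the path is at height $0$, so the initial $n-2k-1$ steps must all be horizontal at height $0$; this already exhausts the budget of $n-1-2k$ horizontal steps, forcing every remaining non-up step to be a down step from height $1$. Hence $\motz_{n-1, P_{\min}}$ contains a unique path, a horizontal prefix at height $0$ followed by $k$ consecutive \textsf{UD} blocks, and its weight is $2^{n-2k-1} \cdot 1^k \cdot 2^k = 2^{n-k-1}$.

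There is no substantial obstacle once Corollary~\ref{coro:motzkin-pin} is in hand; the key observation is that the step-by-step weight bound avoids any need to enumerate $\motz_{n-1,P}$. The hypothesis $n \geq 2k+1$ enters implicitly as the requirement $n-1-2k \geq 0$, which is exactly what guarantees that the horizontal prefix in the $P_{\min}$ construction is well-defined and, more generally, that $\motz_{n-1,P}$ can be non-empty.
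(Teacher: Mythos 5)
Your argument is correct and is essentially identical to the paper's own proof: both bound each weight in Corollary~\ref{coro:motzkin-pin} below by noting that the $k$ down steps and $n-1-2k$ horizontal steps each contribute a factor at least $2$, and both identify the unique Motzkin path for $P_{\min}$ as a horizontal prefix followed by $k$ up--down pairs of minimal weight. Your write-up is in fact slightly more detailed on why that path is unique and why non-emptiness of $\motz_{n-1,P}$ suffices.
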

\begin{proof}
  As $\sym_n(P) \neq \varnothing$, by Corollary~\ref{coro:motzkin-pin}, there is at least one Motzkin path $M \in \motz_{n-1, P}$. Now, there are $n-2k-1$ horizontal step and $k$ down steps in $M$, each contributing a factor at least $2$. We thus have $|\sym_n(P)| \geq w_\motz(M) \geq 2^{n-k-1}$. For $P_{\min}$, from the definition of Motzkin type, it is clear that the only possible Motzkin type for $\cyc{\pi} \in \cyc{\sym_{n+1}}$ with $\pin(\cyc{\pi}) = P_{\min} \cup \{n+1\}$ is the one starting with $2^{n-2k-1}$ horizontal steps, then ending with $k$ pairs of up-and-down steps. This Motzkin path has exactly the minimal weight.
\end{proof}

\begin{rmk}
  The construction in Proposition~\ref{prop:sum-type} can be seen as a variant of the Françon-Viennot bijection between permutations and weighted Motzkin paths, but we use a slightly different set of weights that is more adapted to our analysis afterwards. It was also formulated in terms of increasing binary trees in \cite{flajolet}, which also provides a general theory to express the generating function of weighted Motzkin paths by continued fractions. In the original bijection, reformulated in our terms here, segments are already positioned in the permutation upon its creation, instead of free-floating. This leads to a weight $\ell+1$ on up steps starting at height $\ell$, while the weight of down steps starting at height $\ell$ is reduced to $\ell+1$. 
\end{rmk}

\section{Motzkin path compression}\label{sec:dyck}

A \tdef{Dyck path} is a Motzkin path without horizontal steps. It is clear that it has the same number of up steps and down steps. We denote by $\dyck_n$ the set of Dyck paths of length $2n$. Given a Motzkin path $M$, we define its \tdef{Dyck compression}, denoted by $\dyckc(M)$, the Dyck path obtained by removing all horizontal steps in $M$. We define the \tdef{Dyck type} $\dycktype(\cyc{\pi})$ of a cyclic permutation $\cyc{\pi}$ to be $\dyckc(\motzf(\cyc{\pi}))$. The following is a direct consequence of Lemma~\ref{lem:motzkin-type}.

\begin{prop} \label{prop:dyck-len-pin}
  For $\pi \in \sym_n$, the length of $\dycktype(\cycf(\pi))$ is $2|\pin(\pi)|$.
\end{prop}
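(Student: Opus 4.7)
The plan is to unpack the definition of $\dycktype$ and count up steps using the pinnacle characterization from Lemma~\ref{lem:motzkin-type}. Specifically, $\dycktype(\cycf(\pi)) = \dyckc(\motzf(\cycf(\pi)))$, where Dyck compression removes horizontal steps; so its length equals the number of up and down steps in $\motzf(\cycf(\pi))$. Since $\motzf(\cycf(\pi))$ is a Motzkin path by Lemma~\ref{lem:motzkin-type}, up and down steps occur in equal numbers, and it suffices to count the up steps and multiply by two.

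First I would apply Lemma~\ref{lem:motzkin-type} to $\cycf(\pi) \in \cyc{\sym_{n+1}}$: the $i$-th step of $\motzf(\cycf(\pi))$ (for $i \in [n-1]$) is an up step if and only if $n+1-i$ is a pinnacle of $\cycf(\pi)$. As $i$ ranges over $[n-1]$, the value $n+1-i$ ranges over $\{2, 3, \ldots, n\}$, so the number of up steps equals $|\pin(\cycf(\pi)) \cap \{2, \ldots, n\}|$. Next I would invoke Lemma~\ref{lem:cyc} to get $\pin(\cycf(\pi)) = \pin(\pi) \cup \{n+1\}$; since $n+1 \notin \{2,\ldots,n\}$ and $\pin(\pi) \subseteq [n]$ consists of elements at least $2$, this intersection is exactly $\pin(\pi)$. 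Hence $\motzf(\cycf(\pi))$ has $|\pin(\pi)|$ up steps and, being a Motzkin path, the same number of down steps, giving a Dyck compression of length $2|\pin(\pi)|$. There is no real obstacle: the statement is an immediate bookkeeping consequence of the two preceding lemmas.
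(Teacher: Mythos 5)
Your proof is correct and follows the same route as the paper's: identify the up steps of $\motzf(\cycf(\pi))$ with the pinnacles via Lemma~\ref{lem:motzkin-type} and Lemma~\ref{lem:cyc}, and use that a Motzkin path has equally many up and down steps, both of which survive the Dyck compression. You merely spell out more explicitly the bookkeeping detail that the extra pinnacle $n+1$ of $\cycf(\pi)$ does not correspond to any step of the length-$(n-1)$ Motzkin type, which the paper's one-line proof leaves implicit.
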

\begin{proof}
  We know from Lemma~\ref{lem:cyc}~and~\ref{lem:motzkin-type} that pinnacles in $\pin(\pi)$ are in bijection with up steps in $\motzf(\cycf(\pi))$, which are kept in $\dycktype(\cycf(\pi))$.
\end{proof}

Given $n$ and $P = \{ p_1, p_2, \ldots, p_k \} \subseteq [n]$, we take the convention $p_0 = n+1$ and $p_{k+1} = 1$. We define the \tdef{gap sequence} $(g_0, g_1, \ldots, g_k)$ of $(n,P)$ by taking $g_i = p_i - p_{i+1} - 1$ for all $i$. The name comes from the fact that $g_i$ is the number of steps between the $i$-th and the $(i+1)$-st up step of Motzkin paths in $\motz_{n-1,P}$.

We start by a computational lemma. For $k \geq 1$, we denote by $h_m(x_1, x_2, \ldots, x_k)$ the homogeneous symmetric function of order $m$ with $k$ variables:
\begin{equation}
  \label{eq:def-sum-power}
  h_m(x_1, \ldots, x_k) = \sum_{m_1 + m_2 + \cdots + m_k = m, m_i \geq 0} x_1^{m_1} x_2^{m_2} \cdots x_k^{m_k}.
\end{equation}
Readers are referred to \cite[Chapter~7]{stanley} for more on symmetric functions. We have the following computational lemma for $h_m$.

\begin{lem} \label{lem:sum-power}
  For $m \in \mathbf{N}$, we have
  \[
    h_m(x_1, \ldots, x_k) = \sum_{1 \leq i \leq k} \frac{x_i^{m+k-1}}{\prod_{j \neq i} (x_i - x_j)}.
  \]
\end{lem}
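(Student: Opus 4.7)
The natural approach is via the generating function for the complete homogeneous symmetric functions, combined with a partial fraction expansion. Recall that
\[
  \sum_{m \geq 0} h_m(x_1, \ldots, x_k) \, t^m = \prod_{i=1}^k \frac{1}{1 - x_i t},
\]
which follows directly from the geometric series expansion of each factor and the definition \eqref{eq:def-sum-power}. My plan is to decompose the right-hand side into partial fractions in the variable $t$ (treating the $x_i$ as formal parameters with $x_i \neq x_j$ for $i \neq j$, which is enough since both sides of the target identity are polynomials in $x_1, \ldots, x_k$).

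The partial fraction decomposition has the form
\[
  \prod_{i=1}^k \frac{1}{1 - x_i t} = \sum_{i=1}^k \frac{A_i}{1 - x_i t},
\]
and the coefficient $A_i$ is obtained by multiplying through by $(1 - x_i t)$ and evaluating at $t = 1/x_i$. This yields
\[
  A_i = \prod_{j \neq i} \frac{1}{1 - x_j / x_i} = \prod_{j \neq i} \frac{x_i}{x_i - x_j} = \frac{x_i^{k-1}}{\prod_{j \neq i}(x_i - x_j)}.
\]
Re-expanding each summand as a geometric series in $t$ and extracting the coefficient of $t^m$ gives
\[
  h_m(x_1, \ldots, x_k) = \sum_{i=1}^k A_i x_i^m = \sum_{i=1}^k \frac{x_i^{m+k-1}}{\prod_{j \neq i}(x_i - x_j)},
\]
which is the claim.

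There is really no obstacle here; the only mild point to address is the case of repeated $x_i$'s, where the right-hand side is a priori undefined. Since both sides of the identity are polynomials in the $x_i$ (for the right-hand side, this follows because it is a symmetric rational function whose only possible poles are along the diagonals $x_i = x_j$, but those poles cancel — as one can check by clearing denominators and noting that the numerator vanishes whenever two variables coincide), the identity extends by continuity / polynomial identity from the generic case to all values. Thus proving the identity in the generic case suffices.
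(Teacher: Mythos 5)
Your proof is correct, but it takes a genuinely different route from the paper's. The paper argues by induction on the number of variables $k$: it writes $h_m(x_1,\ldots,x_{k+1})$ as $\sum_{m'} h_{m'}(x_1,\ldots,x_k)\,x_{k+1}^{m-m'}$, applies the induction hypothesis to these coefficients, and reduces the induction step to the identity $\sum_{i} x_i^{k-1}\,\prod_{j\neq i}(x_{k+1}-x_j)\big/\prod_{j\neq i}(x_i-x_j) = x_{k+1}^{k-1}$, which is an instance of Lagrange interpolation. You instead start from the generating function $\sum_{m\geq 0} h_m(x_1,\ldots,x_k)t^m = \prod_i (1-x_it)^{-1}$, perform a partial fraction decomposition in $t$, and extract the coefficient of $t^m$; your computation of the residues $A_i$ and the re-expansion are both correct. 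Your approach buys a one-shot, transparent derivation that exhibits the identity as a coefficient extraction (and sits naturally alongside the continued-fraction/generating-function viewpoint the paper cites for weighted Motzkin paths); the paper's approach stays within polynomial algebra and isolates the interpolation identity as the core of the induction step. Your closing paragraph about coincident variables is harmless but not needed: the right-hand side of the lemma only makes sense for pairwise distinct $x_i$, and the only instance used in the paper has the distinct values $\ell+1,\ell,\ldots,\ell-d+1$.
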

\begin{proof}
  We proceed by induction on $k$. The equality holds trivially for $k = 1$. Now, to pass from $k$ to $k+1$, first write $h_m(x_1, \ldots, x_{k+1})$ as a polynomial in $x_{k+1}$ with coefficients $h_{m'}(x_1, \ldots, x_k)$, then apply the induction hypothesis on these coefficients. Comparing the resulting expression of $h_m(x_1, \ldots, x_{k+1})$ with the claimed one, the induction step is reduced to the proof of the following equality:
  \[
    \sum_{1 \leq i \leq k} x_i^{k-1} \cdot \frac{\displaystyle\prod_{1 \leq j \leq k, j \neq i} (x_{k+1} - x_j)}{\displaystyle\prod_{1 \leq j \leq k, j \neq i} (x_i - x_j)} = x_{k+1}^{k-1}.
  \]
  This equality holds by Lagrange interpolation.
\end{proof}

Now, given Proposition~\ref{prop:dyck-len-pin}, for each pair $(P, D)$ with $P \subseteq [n]$ with $|P| = k$, and $D$ a Dyck path of length $2k$, we associate a weight $w_\dyck(n, P, D)$ defined as follows. For $1 \leq i \leq k - 1$, let $d_i$ be the number of down steps between the $i$-th and the $(i+1)$-st up steps of $D$, and we also take $d_0 = 0$ and $d_k$ the number of down steps at the end of $D$. We also suppose that, for $1 \leq i \leq k$, the $i$-th up step of $D$ goes from height $\ell_i-1$ to $\ell_i$, and we take $\ell_0 = 0$. We thus have two tuples $(d_0, \ldots, d_k)$ and $(\ell_0, \ldots, \ell_k)$ depending only on $D$. Let $(g_0, g_1, \ldots, g_k)$ be the gap sequence of $(n, P)$. We then define $w_\dyck(n,P,D)$ by
\begin{equation} \label{eq:dyck-weight} 
  w_\dyck(n, P, D) = \prod_{0 \leq i \leq k} f(d_i, \ell_i, g_i),
\end{equation}
where $f(d, \ell, g)$ is defined as follows: $f(d, \ell, g) = 0$ when $d > g$, otherwise
\begin{align}
  \begin{split} \label{eq:drun-weight}
    f(d, \ell, g) &= ([\ell=0] + \ell(\ell+1)) h_{g-d}(\ell+1, \ell, \ldots, \ell-d+1) \\
    &= ([\ell=0] + \ell(\ell+1)) \sum_{m=0}^{d} (-1)^m \frac{(\ell+1-m)^{g}}{m! (d-m)!}.
  \end{split}
\end{align}
Here, $[h=0]$ is the Iverson bracket for the condition $h=0$, taking the value $1$ when $h=0$, and $0$ otherwise. We remark that $d_i = \ell_i - \ell_{i+1} + 1$, but we choose the current notation for simplicity. The last equality of \eqref{eq:drun-weight} is from Lemma~\ref{lem:sum-power}.

\begin{prop}\label{prop:dyck-weight}
  Given $P = \{ p_1 > p_2 > \cdots > p_k \} \in [n]$ and a Dyck path $D$ of length $2k$, the number of permutations $\pi$ in $\sym_n(P)$ with Dyck type $D$ is $w_\dyck(n, P, D)$.
\end{prop}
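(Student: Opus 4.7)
The plan is to refine the Motzkin-path enumeration from Corollary~\ref{coro:motzkin-pin} by the Dyck compression. By Proposition~\ref{prop:sum-type}, the number of $\pi \in \sym_n(P)$ with Dyck type $D$ is equal to
\[
  \sum_{\substack{M \in \motz_{n-1,P} \\ \dyckc(M) = D}} w_\motz(M),
\]
so the task reduces to computing this weighted sum of Motzkin paths that compress to $D$.

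Since $D$ already prescribes the sequence of up and down steps of every valid $M$, such an $M$ is determined by the placement of horizontal steps into the $k+1$ gaps cut out by the up steps of $D$. The $i$-th gap must contain exactly $g_i$ steps, of which $d_i$ are down steps (inherited from $D$) and $g_i - d_i$ are horizontals; when $d_i > g_i$ no such $M$ exists, matching $f(d,\ell,g) = 0$ for $d > g$. The weight $w_\motz(M)$ factors over gaps, so it suffices to evaluate the weighted sum inside each gap and multiply.

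Inside the $i$-th gap, the path starts at height $\ell_i$ and descends through $d_i$ down steps; since horizontal steps do not change the height, these down steps always start at heights $\ell_i, \ell_i-1, \ldots, \ell_i-d_i+1$, giving a contribution to $w_\motz(M)$ that is independent of the interleaving. Each horizontal inserted between the $j$-th and $(j+1)$-st down step sits at height $\ell_i - j$ and contributes its Motzkin weight accordingly. Summing over all ways of distributing $g_i - d_i$ horizontals into the $d_i + 1$ available slots yields an evaluation of the complete homogeneous symmetric polynomial $h_{g_i - d_i}$ in $d_i+1$ variables; combining with the invariant down-step contribution and applying Lemma~\ref{lem:sum-power} produces the compact form $f(d_i, \ell_i, g_i)$ appearing in~\eqref{eq:drun-weight}. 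Taking the product over $i=0,\ldots,k$ then gives $w_\dyck(n, P, D)$.

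The main obstacle is the careful algebraic bookkeeping needed to reshape the raw gap sum into exactly the stated form of $f$, in particular the Iverson bracket $[\ell=0]$, which encodes the boundary behavior of the $0$-th gap (no down steps, all horizontals at height $0$, so $\ell_0 = d_0 = 0$) as well as the final gap (forced to end at height $0$, so $d_k = \ell_k$). The passage between the symmetric-function and alternating-sum expressions of $f$ is then an immediate application of Lemma~\ref{lem:sum-power}.
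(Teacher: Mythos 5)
Your overall strategy is the same as the paper's: reduce via Lemma~\ref{lem:cyc} and Proposition~\ref{prop:sum-type} to summing $w_\motz(M)$ over $M \in \motz_{n-1,P}$ with $\dyckc(M)=D$, note that such $M$ are parametrized by distributing the $g_i-d_i$ horizontal steps of each gap into the $d_i+1$ slots between its down steps, and recognize the resulting sum as $h_{g_i-d_i}(\ell_i+1,\dots,\ell_i-d_i+1)$. One bookkeeping slip there: a horizontal step at height $\ell$ has weight $2(\ell+1)$, so each gap's sum carries an extra factor $2^{g_i-d_i}$ that you drop silently; these multiply to the global $2^{n-1-2k}$ that resurfaces in Theorem~\ref{thm:weight-rec}, so it must be tracked explicitly rather than absorbed into $h_{g_i-d_i}$.

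The genuine gap is in your treatment of the down steps. You assert that the ``invariant down-step contribution'' of the $i$-th gap combines with $h_{g_i-d_i}$ to give $f(d_i,\ell_i,g_i)$. But the down steps of gap $i$ contribute $\prod_{j=0}^{d_i-1}(\ell_i-j)(\ell_i-j+1)$, whereas $f(d_i,\ell_i,g_i)$ carries the prefactor $[\ell_i=0]+\ell_i(\ell_i+1)$, which is the weight of a \emph{single} down step launched from height $\ell_i$ (or $1$ when $\ell_i=0$). These disagree whenever $d_i\neq 1$: for $d_i=0$ with $\ell_i\geq 1$ the gap has no down step yet $f$ still contains $\ell_i(\ell_i+1)$, and for $d_i\geq 2$ the gap contains several down-step weights that $f$ does not. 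So the factorization $w_\dyck(n,P,D)=\prod_i f(d_i,\ell_i,g_i)$ is \emph{not} a gap-by-gap identity. The missing idea is a global rearrangement: in a Dyck path the multiset of starting heights of down steps equals the multiset of ending heights $\{\ell_1,\dots,\ell_k\}$ of up steps, so the product of all down-step weights over the whole path equals $\prod_{i=1}^{k}\ell_i(\ell_i+1)$, and only after transferring each down step's weight to the up step of the same height (as the paper does) can one attribute the factor $[\ell_i=0]+\ell_i(\ell_i+1)$ to the $i$-th gap. Relatedly, your reading of the Iverson bracket is off: it is present solely because gap $0$ has no associated up step ($\ell_0=0$, factor $1$), not because of the final gap, where $\ell_k\geq 1$ and the factor is genuinely $\ell_k(\ell_k+1)$.
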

\begin{proof}
  Let $(d_0, \ldots, d_k)$ and $(\ell_0, \ldots, \ell_k)$ be the two tuples used in \eqref{eq:dyck-weight}, and $(g_0, \ldots, g_k)$ the gap sequence of $P$. The length of $D$ given $P$ is imposed by Proposition~\ref{prop:dyck-len-pin}. By Lemma~\ref{lem:cyc} and Proposition~\ref{prop:sum-type}, we only need to show that
  \[
    w_\dyck(n, P, D) = \sum_{M \in \motz_{n-1,P}, \; \dyckc(M) = D} w_\motz(M).
  \]
  By the definition of $\motz_{n-1,P}$, for each $M \in \motz_{n-1,P}$, the $(n+1-p_i)$-th step is an up step for each $1 \leq i \leq k$. We now consider the steps between the $i$-th and the $(i+1)$-st up step in $M$. There are $g_i$ such steps, $d_i$ of them are down steps, so $g_i-d_i$ of them are horizontal. These horizontal steps come with heights from $\ell_i$ to $\ell_i - d_i$. The contribution of all possibilities is
  \[
    \sum_{m_1+\cdots+m_{d_i+1} = g_i-d_i} 2^{g_i-d_i} \prod_{j=1}^{d_i+1} (\ell_i-j+1)^{m_j} = 2^{g_i-d_i} h_{g_i-d_i}(\ell_i+1, \ell_i, \ldots, \ell_i-d_i+1).
  \]
  For steps after the last up step in $M$, the same formula applies. It also holds for the $n-p_1 = g_0$ horizontal steps that comes before the first up step in $M$ by taking $d_0 = 0$. For down steps, we transfer their contribution to up steps of the same height in a Dyck path, as there are the same number of them. As the heights of the ending point of up steps are exactly those $\ell_i$ except for $\ell_0=0$, the contribution is $([\ell_i=0] + \ell_i(\ell_i+1))$ for all $i$. Collecting all factors, we have our claim by noticing that $\sum_{i=0}^k (g_i-d_i) = n - 1 - 2k$.
\end{proof}

\begin{rmk}
  In the definition of $w_\dyck(n, P, D)$, it may sound more natural if $f(d_i, h_i, g_i)$ accounts directly the weights of the $d_i$ down steps. However, the current form is simpler and faster to compute, and more adapted to the proof of Theorem~\ref{thm:q}.
\end{rmk}

\begin{thm}\label{thm:dyck-weight}
  For $P \in [n]$, let $k = |P|$, and we have
  \[
    |\sym_{n}(P)| = \sum_{D \in \dyck_k} w_\dyck(n, P, D).
  \]
\end{thm}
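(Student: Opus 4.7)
The plan is to assemble the result directly from the tools already built, with almost no new content beyond a partitioning argument. The statement asserts that $|\sym_n(P)|$ decomposes as a sum over Dyck paths of length $2k$, indexed by the possible Dyck types of cyclic completions, and both previous propositions provide exactly the ingredients for this.

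First I would observe that the set $\sym_n(P)$ is partitioned by the value of $\dycktype(\cycf(\pi))$. By Proposition~\ref{prop:dyck-len-pin}, for any $\pi \in \sym_n(P)$ the Dyck path $\dycktype(\cycf(\pi))$ has length $2|P| = 2k$, hence belongs to $\dyck_k$. So we can write
\[
  |\sym_n(P)| = \sum_{D \in \dyck_k} \bigl|\{\pi \in \sym_n(P) : \dycktype(\cycf(\pi)) = D\}\bigr|.
\]
Then Proposition~\ref{prop:dyck-weight} identifies the inner cardinality as exactly $w_\dyck(n, P, D)$, which would complete the proof in one line.

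If one prefers to see this from the Motzkin side, the same argument runs through Corollary~\ref{coro:motzkin-pin}: the fibres of $\dyckc \colon \motz_{n-1,P} \to \dyck_k$ partition $\motz_{n-1,P}$, and the internal computation in Proposition~\ref{prop:dyck-weight} shows that summing $w_\motz(M)$ over each fibre yields $w_\dyck(n, P, D)$, so
\[
  |\sym_n(P)| = \sum_{M \in \motz_{n-1,P}} w_\motz(M) = \sum_{D \in \dyck_k} \sum_{\substack{M \in \motz_{n-1,P} \\ \dyckc(M) = D}} w_\motz(M) = \sum_{D \in \dyck_k} w_\dyck(n, P, D).
\]

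There is essentially no obstacle, but one small point deserves mention: the sum on the right ranges over all $D \in \dyck_k$, including those whose shape is incompatible with the gap sequence of $(n, P)$ (for instance, a $D$ demanding more than $g_i$ down steps in some interval between two up steps of $M$). The definition of $w_\dyck$ handles this automatically through the convention $f(d, \ell, g) = 0$ when $d > g$ in \eqref{eq:drun-weight}, so such $D$ contribute zero and the fibre over them is empty, keeping the partitioning consistent. Once this is noted, the theorem follows immediately.
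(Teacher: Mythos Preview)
Your proposal is correct and follows essentially the same approach as the paper: the paper's proof is the one-line remark that the theorem is a direct consequence of Proposition~\ref{prop:dyck-weight} by summing over all $D \in \dyck_k$, which is exactly the partitioning-by-Dyck-type argument you spell out. Your added observations about Proposition~\ref{prop:dyck-len-pin} guaranteeing the Dyck type lies in $\dyck_k$, and about the convention $f(d,\ell,g)=0$ for $d>g$ handling incompatible $D$, are accurate elaborations that the paper leaves implicit.
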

\begin{proof}
  This is a direct consequence of Proposition~\ref{prop:dyck-weight} by summing up all $D \in \dyck_k$.
\end{proof}

Our model here is conceptually simpler, and involves only positive terms (considering the functions $h_m$ as counting combinatorial objects), in contrast to the approach in \cite{pin-sagan} using the principle of inclusion-exclusion.

\section{Recurrence for $|\sym_n(P)|$} \label{sec:rec}

A \tdef{valley Dyck prefix} is a prefix of a Dyck path such that the rest of the path does not start with a down step, but can be empty. We can extend the definition of $w_\dyck(n,P,D)$ to $D$ being a valley Dyck prefix. Suppose that there are $k'$ up steps in $D$, then we take $w_\dyck(n,P,D) = \prod_{i=0}^{k'} f(d_i, \ell_i, g_i)$, with $d_i, \ell_i, g_i$ defined as in the case of Dyck paths, except $d_{k'}$ being the number of down steps at the end of $D$. Let $c_{n,P}(i,j)$ be the total weight $w_\dyck(n,P,D)$ for valley Dyck prefixes with $i$ up steps and $j$ down steps. Then we have the following recurrence.

\begin{thm}\label{thm:weight-rec}
  Given $n$ and $P = \{p_1 > p_2 > \cdots > p_k\} \subseteq [n]$ with $k = |P|$, let $(g_0, \ldots, g_k)$ the gap sequence of $P$. For $i < j$, we take $c_{n,P}(i,j) = 0$. For $0 \leq j \leq i \leq k$, we have
  \begin{align}
    c_{n,P}(0, 0) &= f(0, 0, g_0) = 1, \label{eq:rec-init} \\
    c_{n,P}(i+1, j) &= \sum_{j' = 0}^{j} f(j - j', i - j' + 1, g_{j+1}) c_{n,P}(i, j'). \label{eq:rec}
  \end{align}
  The function $f(d, h, g)$ is defined in \eqref{eq:drun-weight}. As a consequence, we have
  \[
    |\sym_n(P)| = 2^{n - 1 - 2k} \sum_{D \in \dyck_k}w_\dyck(n, P, D) = 2^{n - 1 - 2k} c_{n,P}(k, k).
  \]
\end{thm}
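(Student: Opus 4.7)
The plan is to establish the recurrence by decomposing each valley Dyck prefix at its final up step, and then to deduce the closed form $|\sym_n(P)| = 2^{n-1-2k}\,c_{n,P}(k,k)$ by specialising to $i=j=k$ and combining with Theorem~\ref{thm:dyck-weight}.

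For the base case \eqref{eq:rec-init}, the unique valley Dyck prefix with no up or down steps is the empty path, whose weight is $\prod_{m=0}^{0} f(d_m,\ell_m,g_m)=f(0,0,g_0)$. Substituting $(d,\ell)=(0,0)$ in \eqref{eq:drun-weight} gives $f(0,0,g_0)=(1+0)\cdot h_{g_0}(1)=1$, as required.

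For the recurrence \eqref{eq:rec}, take any valley Dyck prefix $D$ with $i+1$ up steps and $j$ down steps, and let $j'$ denote the number of down steps of $D$ preceding its $(i+1)$-th up step. Cutting $D$ immediately before that up step exhibits $D$ as a piece $D'$ with $i$ up steps and $j'$ down steps, followed by an up step and then $j-j'$ further down steps. Since $D'$ is immediately followed by an up step in $D$, it is itself a valley Dyck prefix, contributing weight $w_\dyck(n,P,D')$; conversely, every such $D'$ extended by an up step and $j-j'$ down steps produces a valley Dyck prefix of the claimed type, so this description is bijective. Under the decomposition the weight factors multiplicatively,
\[
    w_\dyck(n,P,D) \;=\; w_\dyck(n,P,D')\cdot f(d_{i+1},\,\ell_{i+1},\,g_{i+1}),
\]
because the product defining $w_\dyck$ splits cleanly over the sections $0,\ldots,i$ (giving $w_\dyck(n,P,D')$) and the newly created section $i+1$. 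One has $d_{i+1}=j-j'$, and since the added up step raises the height from $i-j'$ to $i+1-j'$, also $\ell_{i+1}=i+1-j'$. Summing over $j'$ and over $D'$, and recognising the inner sum as $c_{n,P}(i,j')$, yields \eqref{eq:rec}, with the third argument of $f$ being the gap $g_{i+1}$ of the new section. Terms with $j'>i$ vanish by the convention $c_{n,P}(i,j')=0$, and terms with $j-j'>g_{i+1}$ vanish because then $f(d,\ell,g)=0$, so the summation range $0\le j'\le j$ is in fact correct.

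Finally, a valley Dyck prefix with $k$ up and $k$ down steps must end at height $0$, so such prefixes are precisely the Dyck paths in $\dyck_k$; hence $c_{n,P}(k,k)=\sum_{D\in\dyck_k} w_\dyck(n,P,D)$. Combining this with Corollary~\ref{coro:motzkin-pin} and the per-$D$ identity $\sum_{M\,:\,\dyckc(M)=D} w_\motz(M) = 2^{n-1-2k}\,w_\dyck(n,P,D)$ that is already obtained inside the proof of Proposition~\ref{prop:dyck-weight} (via $\sum_{i=0}^{k}(g_i-d_i)=n-1-2k$) produces $|\sym_n(P)| = 2^{n-1-2k}\,c_{n,P}(k,k)$. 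The only real obstacle here is careful bookkeeping: section indices, heights and partial down-step counts must be tracked consistently, so that the new $f$-factor is indexed by the newly created section $i+1$, rather than by any cumulative quantity.
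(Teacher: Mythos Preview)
Your proof is correct and follows exactly the paper's approach: decompose a valley Dyck prefix with $i+1$ up steps at its last up step, recognise the truncated prefix as one counted by $c_{n,P}(i,j')$, and read off $f(j-j',\,i-j'+1,\,\cdot)$ as the weight of the newly created section. One small remark: your derivation (correctly) gives $g_{i+1}$ as the third argument of $f$, since it is the gap attached to the $(i{+}1)$-st up step; the $g_{j+1}$ appearing in \eqref{eq:rec} and in the paper's proof is a typo, as one already sees for $k=1$.
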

\begin{proof}
  The initial condition \eqref{eq:rec-init} stands for the empty valley Dyck prefix, and holds by definition. For the recurrence \eqref{eq:rec}, consider a valley Dyck prefix $D$ with $i+1$ up steps and $j$ down steps, and $D'$ the valley Dyck prefix of $D$ before the last up step of $D$. It is clear that $D'$ has $i$ up steps, and let $j' \leq j$ be the number of down steps in $D'$. There are thus $j-j'$ consecutive down steps at the end of $D$, starting at height $i-j'+1$. Their contribution to the weight is $f(j-j', i-j'+1, g_{j+1})$. By summing over all possibilities of $j'$, we have the recurrence. It is clear that $c_{n,P}(k,k)$ accounts for all Dyck paths, and we have the expression of $|\sym_n(P)|$ by Theorem~\ref{thm:dyck-weight}.
\end{proof}

The recurrence in Theorem~\ref{thm:weight-rec} provides an efficient way to compute $|\sym_n(P)|$. Now we describe a possible implementation as a proof of Proposition~\ref{prop:algo}.

\begin{proof}[of Proposition~\ref{prop:algo}]
  Suppose that $P \subseteq [n]$ with $k = |P|$. To compute $|\sym_n(P)|$, we use the recurrence in Theorem~\ref{thm:weight-rec}, which has $O(k^2)$ terms, and each term needs $O(k)$ multiplications and evaluations of $f(d,\ell,g)$, and each $f(d,\ell,g)$ is essentially a sum of $O(k)$ terms. We thus need $O(k^4)$ operations to sum up these terms. We now see that these $O(k^4)$ terms in $f(d,\ell,g)$ have the form $(\ell+1-m)^g(m!(d-m)!)^{-1}$, with $0 \leq m \leq d$. Moreover, there are only $k+1$ values of $g$ that are used, precisely those in the gap sequence, and we also have $1 \leq \ell+1-m \leq k+1$. Thus, we can precompute all such powers with $O(k^2\log n)$ operations using fast exponentiation, as $g_j \leq n$. The factorials can also be precomputed with $O(k)$ operations. With such precomputation, each of the $O(k^4)$ terms can be computed with a constant number of operations. The total arithmetic complexity is thus $O(k^4 + k^2 \log n)$.
\end{proof}

However, even without the optimizations in the proof of Proposition~\ref{prop:algo}, the recurrence in Theorem~\ref{thm:weight-rec} is fast enough for most practical use. For instance, for
\[
  P = \{ 97, 94, 85, 79, 68, 67, 63, 48, 43, 38, 25, 24, 23, 18, 13, 8, 3 \}, n = 100,
\]
a naïve implementation of the recurrence using Python gives the following result for $|\sym_n(P)|$ in 18 milliseconds on a low-end laptop with an 1.6GHz Intel Core i5-8250U: \numprint{2056053437 7719527577 7666916692 7111145600 8071023389 3827186696 7172893700 9544359423 5099087423 4585088000}. \; We also note that the algorithm works correctly even when $|\sym_n(P)| = 0$.

The recurrence in Theorem~\ref{thm:weight-rec} can also be easily computed symbolically using a computer algebra system, giving a closed-form, non-recursive formula for $|\sym_n(P)|$ for each fixed size of $P$. This settles Question~4.4 in \cite{pin-2018} for any fixed number of pinnacles. The cases $|P|=1, 2$ have already appeared in Proposition~3.6 of \cite{pin-2018} and reproved in \cite{pin-marne} in a much simpler way.

\begin{rmk}
  Another use of Theorem~\ref{thm:dyck-weight} is the exhaustive generation of all permutations in $\sym_n(P)$, which can be done in linear amortized time, \textit{i.e.}, each object generated in linear time on average. We first use the recurrence to identify viable Dyck paths. Then we decompress each Dyck path into a family of Motzkin paths by generating subsets of $d$ down steps in the gap $g$ for each contribution $f(d,\ell,g)$. This can be done by constant amortized time subset generators (see Chapter~7.2.1.3 of \cite{taocp}). For each Motzkin path, we generate choices for each step in the construction in the proof of Proposition~\ref{prop:sum-type}, which takes constant amortized time, and then perform the construction, which takes linear time. It might be possible to improve the complexity to constant amortized time, potentially by creating suitable data structure to efficiently transpose changes of the choices for each step into the permutation.
\end{rmk}

\section{A simple form of a weighted sum of $|\sym_n(P)|$} \label{sec:q}

We will now prove our formula of $q_n(P)$ in Theorem~\ref{thm:q}. It is equivalent to Conjecture~5.1 in \cite{pin-marne}, which will not be stated here (see Remark~\ref{rmk:q-conj}). We proceed as follows: we first propose a recurrence similar to that in Theorem~\ref{thm:weight-rec} and prove that it computes $q_n(P)$, then we prove inductively a general form for all terms in the recurrence, which implies our claim.

\begin{prop}\label{prop:q-rec}
  For $n \geq 1$ and $P = {p_1 > p_2 > \cdots > p_k} \subseteq [n]$, we take the convention that $p_0 = n+1$ and $p_{k+1} = 1$. We define $a_{n,P}(i,j)$ for $0 \leq i,j \leq k$ as follows. When $i < j$, we have $a_{n,P}(i,j) = 0$, and otherwise the following well-founded recurrence is satisfied:
  \begin{align}
    a_{n,P}(0,0) &= f(0, 0, p_0 - p_1 - 1) = 1 \label{eq:q-init} \\
    \begin{split}\label{eq:q-rec}
      a_{n,P}(i+1,j) &= [i+1=j] f(0, 0, p_0 - p_{i+2} - 1) \\
      &\quad+ \frac1{2} \sum_{i'=0}^{i} \sum_{j'=0}^{j-i+i'} f(i'-j'-i+j, i'-j'+1, p_{i'+1} - p_{i+2} - 1) a_{n,P}(i',j').
    \end{split}
  \end{align}
  Here, $[i+1=j]$ is the Iverson bracket, which takes $1$ when $i+1=j$, and $0$ otherwise. Then,
  \[
    q_n(P) = 2^{n-1} a_{n,P}(k,k).
  \]
\end{prop}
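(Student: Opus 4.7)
The plan is to set up a precise combinatorial interpretation of $a_{n,P}(i,j)$ and verify that it satisfies the stated recurrence. Using the formula $|\sym_n(Q)| = 2^{n-1-2|Q|} c_{n,Q}(|Q|,|Q|)$ from Theorem~\ref{thm:weight-rec}, the target identity $q_n(P) = 2^{n-1} a_{n,P}(k,k)$ first reduces to $a_{n,P}(k,k) = \sum_{Q \subseteq P} 2^{-|Q|}\, c_{n,Q}(|Q|,|Q|)$, so the remaining task is to match $a_{n,P}$ as defined by the recurrence with this double sum.

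I then interpret $a_{n,P}(i,j)$ combinatorially as follows. Consider scanning the pinnacles $p_1, p_2, \ldots, p_i$ of $P$ in order, deciding for each whether it belongs to $Q$ (contributing a real up step to a valley Dyck prefix $D$) or is skipped (contributing a conceptual ``virtual up-down pair'' that increments only the down counter but leaves $D$ unchanged). Then $a_{n,P}(i,j)$ should count pairs $(Q, D)$ with $Q \subseteq \{p_1, \ldots, p_i\}$ and $D$ a valley Dyck prefix with $|Q|$ up steps and $j-(i-|Q|)$ real down steps, weighted by $2^{-|Q|}$ times the product of $f$-factors using $Q$'s gap sequence, where the final gap extends from the largest element of $Q$ (or from $p_0 = n+1$ if $Q = \varnothing$) down to $p_{i+1}$.

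The main step is verifying the recurrence \eqref{eq:q-rec} under this interpretation, by splitting each configuration contributing to $a_{n,P}(i+1,j)$ by the index of the most recently added element of $Q$. If the last real up step corresponds to $p_{i'+1}$ for some $0 \leq i' \leq i$, then $p_{i'+2}, \ldots, p_{i+1}$ are all skipped, adding $i-i'$ to the virtual down count; the corresponding gap in $Q$'s sequence is the super-gap $p_{i'+1} - p_{i+2}-1$, giving the third argument of $f$. The number of real down steps added after this up step is $(j-j')-(i-i') = i'-j'-i+j$, the first argument, and the starting height is $i'-j'+1$, the second, matching the analysis in the proof of Theorem~\ref{thm:weight-rec}. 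The factor $\tfrac{1}{2}$ reflects multiplying by $2^{-1}$ when $p_{i'+1}$ joins $Q$. The boundary term $[i+1=j]\, f(0,0,p_0-p_{i+2}-1)$ handles the degenerate case $Q=\varnothing$: all $i+1$ scanned pinnacles are skipped, the virtual counters then force $j=i+1$, and a single $f$-factor $f(0,0,p_0-p_{i+2}-1)=1$ survives for the single super-gap spanning $p_0$ to $p_{i+2}$.

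Finally, evaluating at $(i,j) = (k,k)$ gives $j - (i-|Q|) = |Q|$ real down steps (so $D$ is a complete Dyck path) and the final gap extends to $p_{k+1}=1$, so that the weights line up exactly with $w_\dyck(n,Q,D)$ and we recover the desired $\sum_{Q \subseteq P} 2^{-|Q|}\, c_{n,Q}(|Q|,|Q|)$. The main technical obstacle is carefully verifying the virtual up-down bookkeeping in every case: one needs that the heights appearing as the second argument of $f$ are unaffected by the insertion of virtual pairs (they match the ``virtual height'' $i'-j'+1$ rather than the real height $|Q'|-j'+1$), and that the boundary condition triggers precisely when no real up step has been scanned.
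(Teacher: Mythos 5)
Your proposal is correct and follows essentially the same route as the paper: the paper interprets $a_{n,P}(i,j)$ as a re-weighted sum over valley Motzkin prefixes of length $n-p_{i+1}$ ending at height $i-j$ and decomposes at the last up step, which is exactly your interpretation via pairs $(Q,D)$ once one notes that ``ending height $i-j$'' is equivalent to your bookkeeping ``real down steps $= j-(i-|Q|)$'' and that the $f$-factors already sum over horizontal-step placements. Your flagged concern about the height argument resolves in your favor (the height of the new up step is indeed $i'-j'+1$ in both the real and virtual accounting), and the only slip is cosmetic: the final gap of $Q$ runs from its \emph{smallest} (last-scanned) element down to $p_{i+1}$, not its largest.
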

\begin{proof}
  Given $M$ a Motzkin path, we denote by $|M|_{\mathrm{up}}$ the number of up steps in $M$. By Lemma~\ref{lem:motzkin-type} and Proposition~\ref{prop:sum-type}, $q_n(P)$ is the weighted sum of Motzkin paths $M$ of length $n-1$ whose up steps can occur as the $(n+1-p_i)$-th step for all $1 \leq i \leq k$, but not necessarily, and the weight is given by $2^{|M|_{\mathrm{up}}} w_\motz(M)$. We denote this set of Motzkin paths by $\widetilde\motz_{n-1,P}$, which is exactly $\bigcup_{P' \subseteq P} \motz_{n-1, P'}$, and we have
  \begin{equation} \label{eq:q-motz}
    q_n(P) = \sum_{M \in \widetilde\motz_{n-1,P}} 2^{|M|_{\mathrm{up}}} w_\motz(M).
  \end{equation}

  We now propose an altered weight $\widetilde{w}_\motz(M)$, where an up step has weight $1/2$, a horizontal step on height $\ell$ has weight $\ell+1$, and a down step from height $\ell$ to $\ell-1$ has weight $\ell(\ell+1)$. We observe that a Motzkin path $M$ of length $n-1$ has $n-1-2|M|_{\mathrm{up}}$ horizontal steps. Comparing step weights in $w_\motz(M)$ and $\widetilde{w}_\motz(M)$, we have
  \[
    \widetilde{w}_\motz(M) = 2^{1-n} \cdot 2^{|M_{\mathrm{up}}|} w_\motz(M).
  \]
  Combining with \eqref{eq:q-motz}, we have
  \[
    q_n(P) = 2^{n-1} \sum_{M \in \widetilde\motz_{n-1,P}} \widetilde{w}_\motz(M).
  \]

  The proof of the recurrence is analogous to that of Theorem~\ref{thm:weight-rec}. This time, we consider \tdef{valley Motzkin prefixes}, which are prefixes such that the rest of the path is empty or starts with an up step. The altered weight $\widetilde{w}_\motz(M)$ also extends to these prefixes. We now claim that $a_{n,P}(i,j)$ is the weighted sum of valley Motzkin prefixes of length $n-p_{i+1}$, ending on height $i-j$, with the weight given by $\widetilde{w}_\motz$. In other words, these are valley Motzkin prefixes of Motzkin paths in $\widetilde\motz_{n-1,P}$ whose $(n+1-p_{i+1})$-th step is an up step. We must have $0 \leq j \leq i$, as there may be at most $i$ up steps in the valley Motzkin prefixes accounted by $a_{n,P}(i,j)$ for any $i$. 

  We now prove our claim by induction on $i$. The initial case $i=0$ is given by definition. Now suppose that our claim holds for all $i' \leq i$. Consider a valley Motzkin prefix $M$ of length $n-p_{i+2}$ ending on height $i+1-j$, which is supposed to contribute to $a_{n,P}(i+1,j)$. Either $M$ has no up step, which can only occur when $i+1=j$, and in this case its contribution is $1$, the same as the Iverson bracket in \eqref{eq:q-rec}; or $M$ as at least one up step. Let $M'$ be the prefix of $M$ before its last up step, which is also a valley Motzkin prefix. Suppose that $M'$ is of length $n-p_{i'+1}$ and ending at height $i'-j'$ for some $i'$ and $j'$. It is clear that $0 \leq i' \leq i$, and we also have $i'-j' \geq i-j$, as there is exactly one up step in $M$ after $M'$. The segment of $M$ after its last up step starts at height $i'-j'+1$, contains $p_{i'+1}-p_{i+2}-1$ steps, $i'-j'-i+j$ of them are down steps by variation of height. Therefore, we have
  \[
    \widetilde{w}(M) = \frac1{2} \sum_{i'=0}^{i} \sum_{j'=0}^{j-i+i'} f(i'-j'-i+j, i'-j'+1, p_{i'+1} - p_{i+2} - 1) \widetilde{w}(M'),
  \]
  where the factor $1/2$ is for the last up step of $M$. Summing over all possible $i', j'$ and $M'$, we conclude the induction. We then conclude the proof by observing that $a_{n,P}(k,k)$ accounts for all Motzkin paths in $\widetilde{\motz}_{n-1,P}$.
\end{proof}

In the proof of Theorem~\ref{thm:q}, we need the following computational result.

\begin{prop}\label{prop:binom-poly}
  For $n > 0$ and $P(x)$ a polynomial of degree strictly less than $n$, we have
  \[
    \sum_{k=0}^n (-1)^k \binom{n}{k} P(k) = 0.
  \]
\end{prop}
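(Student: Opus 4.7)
The plan is to recognize this as a standard fact about the iterated forward difference operator $\Delta$ defined by $(\Delta f)(x) = f(x+1) - f(x)$. The core observation is that $\Delta$ strictly decreases the degree of any nonzero polynomial: if $P(x) = a_d x^d + \text{lower order}$ with $a_d \neq 0$, then $(\Delta P)(x) = a_d d\, x^{d-1} + \text{lower order}$, and $\Delta P \equiv 0$ when $P$ is constant. Iterating, $\Delta^n P \equiv 0$ for every polynomial $P$ with $\deg P < n$.

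Next, I would expand $\Delta^n$ explicitly using the shift operator $E$ defined by $(Ef)(x) = f(x+1)$. Since $\Delta = E - I$ and $E, I$ commute, the binomial theorem gives
\[
\Delta^n = (E-I)^n = \sum_{k=0}^n (-1)^{n-k} \binom{n}{k} E^k.
\]
Applying this identity of operators to $P$ and evaluating at $x = 0$ yields
\[
0 = (\Delta^n P)(0) = \sum_{k=0}^n (-1)^{n-k} \binom{n}{k} P(k).
\]
Multiplying through by $(-1)^n$ produces exactly the claimed identity.

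Since the result is a classical consequence of the theory of finite differences, there is no real obstacle; the only point to be slightly careful about is the base of the induction showing $\deg(\Delta P) = \deg P - 1$, which as noted is immediate from expanding $(x+1)^d - x^d$. As an alternative one-line justification, one can invoke linearity and verify the identity on the falling factorial basis $(x)_j = x(x-1)\cdots(x-j+1)$ for $0 \le j < n$: using $\binom{n}{k}(k)_j = j!\binom{n}{j}\binom{n-j}{k-j}$, the sum reduces to $j!\binom{n}{j}(-1)^j (1-1)^{n-j} = 0$ since $n-j \geq 1$.
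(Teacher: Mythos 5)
Your primary argument is correct and takes a genuinely different route from the paper. The paper reduces the claim by linearity to the falling factorials $P(x)=x(x-1)\cdots(x-j+1)$ with $j<n$, and then verifies the identity for these by differentiating the expansion of $(1-y)^n$ exactly $j$ times and setting $y=1$. You instead work with the forward difference operator: since $\Delta$ strictly lowers degree, $\Delta^n P\equiv 0$ whenever $\deg P<n$, and expanding $\Delta^n=(E-I)^n$ by the binomial theorem and evaluating at $0$ gives the sum (up to the harmless global sign $(-1)^n$). This operator proof avoids any change of basis and makes transparent why the bound on the degree is exactly $n$; the paper's proof is shorter to state but hides the same mechanism inside the calculus manipulation of $(1-y)^n$. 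Your alternative one-line justification --- checking the identity on the falling factorial basis via $\binom{n}{k}(k)_j=j!\binom{n}{j}\binom{n-j}{k-j}$ and collapsing to $(1-1)^{n-j}=0$ --- is in fact the closest to the paper's argument, differing only in that it replaces the differentiation step by a trinomial-revision identity. All three verifications are correct; there is no gap.
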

\begin{proof}
  By linearity, it suffices to prove for $P(x)=x(x-1)\cdots(x-k+1)$ for all $k < n$. This is done by deriving $k$ times by $y$ the expansion of $(1-y)^n$, then setting $y=1$.
\end{proof}

We recall that $\mathcal{R}_{k,s}$ is the set of $y$-coordinate sequences of Dyck meanders of length $k$ ending at height $s$, and $\mathcal{R}_k = \cup_{s=0}^k \mathcal{R}_{k,s}$. Given $r \in \mathcal{R}_k$, we denote by $r^\uparrow$ (resp. $r^\downarrow$) the new sequence obtained by appending $r_{k+1} = r_k+1$ (resp. $r_{k+1} = r_k-1$). We can now prove Theorem~\ref{thm:q}.

\begin{proof}[of Theorem~\ref{thm:q}]
  We prove the following result for $a_{n,P}(i,j)$ for $0 \leq j \leq i \leq k$:
  \begin{equation}
    \label{eq:q-simple}
    a_{n,P}(i,j) = 2^{-i} \sum_{s=i-j}^{i} \frac{s!}{(s-i+j)!} \sum_{r \in \mathcal{R}_{i, s}} w_\meand(r).
  \end{equation}
  It implies our claim, as $q_n(P) = 2^{n-1} a_{n,P}(k, k)$.
  
  We proceed by induction on $i$. For $i = 0$, the sum in $a_{n,P}(0,j)$ is empty except for $j=0$, when we have $a_{n,P}(0,0) = 1$. Now assume that \eqref{eq:q-simple} holds for all $i' \leq i$, and we compute $a_{n,P}(i+1,j)$ using the induction hypothesis and \eqref{eq:q-rec}. We first check the case $i+1 \neq j$, where the Iverson bracket \eqref{eq:q-rec} is $0$, leading to
  \begin{align}
    a_{n,P}(i+1,j)
    &= 2^{-i-1} \sum_{i'=0}^{i} \sum_{j'=0}^{j-i+i'}  \sum_{s=i'-j'}^{i'} \frac{s!}{(s-i'+j')!} \sum_{r \in \mathcal{R}_{i', s}} w_\meand(r) \nonumber \\
    &\quad \cdot \left[ (i'-j'+1)(i'-j'+2) \sum_{t=0}^{i'-j'-i+j} (-1)^t \frac{(i'-j'+2-t)^{p_{i'+1} - p_{i+2} - 1}}{t!(i'-j'-i+j-t)!} \right] \nonumber \\
    &= 2^{-i-1} \sum_{i'=0}^{i} \sum_{s=i-j}^{i'} \sum_{r \in \mathcal{R}_{i', s}} w_\meand(r) \alpha(s, i-j, p_{i'+1} - p_{i+2} - 1), \label{eq:sum-alpha}
  \end{align}
  where
  \begin{align}
    \begin{split} \label{eq:alpha-def}
    \alpha(s, \delta, u) &= \sum_{\delta' = \delta}^{s} \frac{s!}{(s-\delta')!} \cdot \left[ (-1)^{\delta'} (\delta'+1)(\delta'+2) \sum_{t=\delta}^{\delta'} (-1)^t \frac{(t+2)^u}{(\delta'-t)!(t-\delta)!} \right] \\
    &= \sum_{t=\delta}^{s} (-1)^t \frac{(t+2)^u s!}{(s-t)!(t-\delta)!} \sum_{\delta' = t}^{s} (-1)^{\delta'} (\delta'+1)(\delta'+2) \binom{s-t}{\delta'-t}.
    \end{split}
  \end{align}
  Here, we used the generic expression without Iverson bracket of $f(d,\ell,g)$ in \eqref{eq:drun-weight}, as we never have $i'-j'+1 = 0$ in the case $i+1 \neq j$. We can also assume $\delta \geq 0$. For $s-t > 2$, by Proposition~\ref{prop:binom-poly}, the summation over $\delta'$ is zero. Therefore, when $s \geq \delta + 2$, the only non-zero terms are for $t=s-2, s-1, s$, leading to
  \begin{equation}\label{eq:alpha-3}
    \alpha(s, \delta, u) = \frac{(s+2)^{u+1} (s+1)!}{(s-\delta)!} - \frac{2(s+1)^{u+1} s!}{(s-\delta-1)!} + \frac{s^{u+1} (s-1)!}{(s-\delta-2)!}.
  \end{equation}
  However, when $s=\delta+1$, we discard the last term, and when $s=\delta$, only the first term exists.

  We now remark that we may extend the definition of $w_\meand$ to $r_-$ that may take $-1$ as the last element, and we have $w_\meand(r_-) = 0$ in such cases. Recalling that $g_{i+1} = p_{i+1}-p_{i+1}-1$, the contribution of $r \in \meand_{i',s}$ to $a_{n,P}(i+1,j)$ can thus be expressed by $C_+(r) - C_-(r)$ from \eqref{eq:alpha-3}, where, for $s \geq i-j+2$,
  \begin{align}
    \begin{split}\label{eq:contrib}
    C_+(r) &= 2^{-i'-1} w_\meand(r) \left[ \frac{(s+2)^{p_{i'+1}-p_{i+2}}(s+1)!}{(s-i+j)!} + \frac{s^{p_{i'+1}-p_{i+2}}(s-1)!}{(s-i+j-2)!} \right], \\
    C_-(r) &= 2^{-i'} w_\meand(r) \frac{(s+1)^{p_{i'+1}-p_{i+2}}s!}{(s-i+j-1)!}.
    \end{split}
  \end{align}
  For $s = i-j+1$, the last term in \eqref{eq:alpha-3} is discarded, and we only have the first term of $C_+(r)$, while $C_-(r)$ remains unchanged. For $s = i-j$, not only $C_+(r)$ has only one term, but the term in \eqref{eq:alpha-3} corresponding to $C_-(r)$ is also discarded, leaving $C_-(r) = 0$. For $s < i-j$, we have $C_+(r) = C_-(r) = 0$.
  
  We may also extend the definition of $C_+$ and $C_-$ to $r_-$ that may take $-1$ as the last element, and in this case, $C_+(r_-) = C_-(r_-) = 0$, as $w_\meand(r_-) = 0$. The value of $a_{n,P}(i+1,j)$ is the sum of all contributions:
  \[
    a_{n,P}(i+1,j) = \sum_{i'=0}^{i} \sum_{s=i-j}^{i'} \sum_{r \in \meand_{i',s}} (C_+(r) - C_-(r)).
  \]

  For $s \geq i-j+2$ and $r \in \meand_{i',s}$, we have $w_\meand(r^\uparrow) = w_\meand(r) (s+2)^{p_{i'+1} - p_{i'+2}}$ and $w_\meand(r^\downarrow) = w_\meand(r) s^{p_{i'+1} - p_{i'+2}}$. With this fact, we have $C_+(r) - C_-(r^\uparrow) - C_-(r^\downarrow) = 0$ by checking carefully that, in \eqref{eq:contrib}, the first term of $C_+(r)$ is exactly $C_-(r^\uparrow)$, and the second is $C_-(r^\downarrow)$. When $s = i-j+1$ or $s = i - j$, we only have the first term in $C_+(r)$, but in this case $C_-(r^\downarrow) = 0$, and the same still holds. 
  
  Therefore, summing over all paths, the only contribution terms $C_+(r)$ that is left is when $r \in \meand_{i,s}$ with $s \geq i-j$, and the only $C_-(r)$ left is for $r \in \meand_{i-j,i-j}$, and we have $C_-(r) = 0$ for such $r$. Using the fact that $C_-(r) = 0$ for $r \in \meand_{i+1,i-j}$, we have
  \begin{align*}
    a_{n,P}(i+1,j) &= \sum_{s=i-j}^{i} \sum_{r \in \meand_{i,s}} C_+(r) = \sum_{s=i-j+1}^{i+1} \sum_{r \in \meand_{i+1, s}} C_-(r) \\
                   &= 2^{-i-1} \sum_{s=i+1-j}^{i+1} \sum_{r \in \meand_{i+1,s}} w_\meand(r) \frac{s!}{(s-i+j-1)!}
  \end{align*}

  For the case $i+1 = j$, there are several differences that compensate each other. First, in this case, not all the $i'$ and $j'$ in the recurrence \eqref{eq:q-rec} are valid. More precisely, when $j'=i'+1$, which is possible when $i+1=j$, we have $a_{n,P}(i',j') = 0$. Therefore, we should discard the term linked with $j'=i'+1$, which also means that we can continue to use the generic expression without Iverson bracket of $f(d,\ell,g)$ in \eqref{eq:drun-weight}.

  For terms being discarded, firstly, we may only sum over $s$ from $0$ to $i'$ in \eqref{eq:sum-alpha}. Therefore, we always have $s \geq 0$ in $\alpha(s, \delta,u)$. Furthermore, in $\alpha(s,-1,u)$, we discard the terms with $\delta=-1$ in \eqref{eq:alpha-def}, but not those with $t = -1$. When $s \geq 1$, this has no effect. When $s=0$, we discard the last term on the right-hand side of \eqref{eq:alpha-3}. The effect on $C_+(r)$ and $C_-(r)$ is the same as the general case, and the whole reasoning holds, except that we didn't account for the negative contribution term $C_-((0)) = 1$. However, it compensates with the Iverson bracket in \eqref{eq:q-rec}, and the same result holds. We thus complete the induction on all cases.
\end{proof}

\begin{rmk}\label{rmk:q-conj}
  Conjecture~5.1 in \cite{pin-marne} involves a relatively simple but non-trivial iterative procedure to generate an expression of $q_n(P)$. However, it is equivalent to Theorem~\ref{thm:q}. We only briefly describe the reason here, without a detailed proof. In the procedure in \cite{pin-marne}, there are two operators $f_e$ and $f_o$, one used for even number of pinnacles, the other for the odd case. In our model of lattice paths, both operators ``extend'' the path by one step, up or down. The apparent difference between $f_e$ and $f_o$ is due to the fact that, when paths are extended to odd height (so even weight for horizontal steps), a power of $2$ is collected somewhere else. 

  Theorem~\ref{thm:q} also explains the phenomenon observed in \cite{pin-marne} that, in the computation of $q_n(P)$, we may have $2$ in $P$, although it cannot be a pinnacle. For $P \subseteq [n] \setminus \{1,2\}$, let $k = |P|$. It is clear that $q_n(P) = q_n(P \cup \{2\})$, as $\sym_n(P') = \varnothing$ for any $P'$ containing $2$. We check that, for each $r \in \mathcal{R}_{k}$, the contribution of $r^\uparrow$ and $r^\downarrow$ to $q_n(P \cup \{2\})$ is exactly twice that of $r$ to $q_n(P)$, which is compensated by the increment of number of pinnacles.
\end{rmk}

In \cite{pin-2018}, it was proposed that, as $q_n(P)$ may have a simpler form, it may be easier to compute $|\sym_n(P)|$ using $q_n(P)$ and the principle of inclusion-exclusion. However, the arithmetic complexity of such computation is exponential in $|P|$, in contrast to Proposition \ref{prop:algo} here.

Although the statement of Theorem~\ref{thm:q} has a strong combinatorial flavor, its proof here is mainly computational, which is not quite satisfying. We thus have the following natural question:
\begin{open}
  Is there a combinatorial or even bijective proof of Theorem~\ref{thm:q}?
\end{open}
If such a proof exists, it might lead us to a better understanding why the form of $q_n(P)$ is much simpler than that of $|\sym_n(P)|$.

\section{Counting pinnacle orders} \label{sec:pin-order}

Given $P = \{ p_1 > p_2 > \cdots > p_k \}$, for some $\pi \in \sym_n(P)$ for $n \geq p_1$, the \tdef{pinnacle ordering} of $\pi$, denoted by $\ord(\pi)$, is a permutation $\sigma \in \sym_k$ such that $p_1, \ldots, p_k$ appear in $\pi$ in the order $p_{\sigma(1)}, p_{\sigma(2)}, \ldots, p_{\sigma(k)}$. For instance, for the permutation $\pi = 46352817$, its pinnacle set is $\pin(\pi) = \{8, 6, 5\}$, and we have $\ord(\pi) = 231$, as the pinnacles in $\pin(\pi)$ appear in $\pi$ in the order $6, 5, 8$.

A permutation $\sigma \in \sym_k$ is an \tdef{admissible pinnacle ordering} (or simply \tdef{admissible ordering}) of $P$ if there is $\pi \in \sym_n(P)$ with $\ord(\pi) = \sigma$. We denote by $\ordset(P)$ the set of admissible pinnacle ordering of $P$. Note that $\ordset(P)$ does not depend on $n$, as in every $\pi \in \sym_n(P)$, any element greater than $p_1$ cannot be a peak, and they must thus be located at the two ends of $\pi$. Hence, taking any $n > p_1$ will not lead to more admissible orderings than taking $n = p_1$.


Given $\sigma \in \sym_k$, we want to know whether $\sigma$ is in $\ordset(P)$. We have the following characterization of possible pinnacle orders $\sigma$ for $\cycf(\pi)$ of a given Dyck type, using the Motzkin type of $\cycf(\sigma)$. To simplify the proof, for $\sigma \in \sym_k$, we define its \tdef{complement}, denoted by $\hat\sigma$, by $\hat\sigma(i) = k + 1 - \sigma(i)$. We recall that, for a cyclic permutation $\cyc\pi$, its segment set of level $\ell$, denoted by $S_\ell(\cyc\pi)$, is the set of maximal consecutive segments formed by elements at least $\ell$ in $\cyc\pi$, and we define $s_\ell(\cyc\pi) = |S_\ell(\cyc\pi)|$.

\begin{prop} \label{prop:pin-ord-seg}
  Given $P \subseteq [n]$ with $|P| = k$, we fix $D$ a Dyck path of length $2k$ such that there is some $\pi_* \in \sym_n(P)$ with $\dycktype(\cycf(\pi_*)) = D$. For a permutation $\sigma \in \sym_k$, we let $M$ be the Motzkin type of $\cycf(\sigma)$, that is, $M = \motzf(\cycf(\sigma))$. We have the following equivalence:
  \begin{enumerate}
  \item[(i)] There exists $\pi \in \sym_n(P)$ with $\dycktype(\cycf(\pi)) = D$ and $\ord(\pi) = \hat\sigma$;
  \item[(ii)] The Motzkin path $M$ is \tdef{compatible} with the Dyck path $D$, that is, for all $1 \leq i \leq k-1$, the starting height of the $i$-th step of $M$ never exceeds that of the $i$-th \emph{up step} of $D$.
  \end{enumerate}
\end{prop}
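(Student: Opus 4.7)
The plan is to reduce both conditions to comparisons of segment counts at ``pinnacle-related'' levels. First, unpack the structural meaning of $\hat\sigma$: observe that $\ord(\pi)=\hat\sigma$ is equivalent to saying that the cyclic sequence of pinnacles of $\cycf(\pi)$, after the relabeling $n+1\mapsto k+1$ and $p_j\mapsto k+1-j$, equals exactly $\cycf(\sigma)$. By the definition of the Motzkin type from Section~\ref{sec:motz}, the starting height of the $i$-th step of $M=\motzf(\cycf(\sigma))$ equals $s_{k+2-i}(\cycf(\sigma))-1$, and the starting height of the $i$-th up step of $D$ equals the starting height of the $i$-th up step of $\motzf(\cycf(\pi))$, namely $s_{p_i+1}(\cycf(\pi))-1$. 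Condition~(ii) thus reads
\[
s_{k+2-i}(\cycf(\sigma))\leq s_{p_i+1}(\cycf(\pi)) \quad\text{for all } 1\leq i\leq k-1.
\]

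For (i)~$\Rightarrow$~(ii), fix $i$. The maximum of every segment of $\cycf(\pi)$ at level $p_i+1$ is a local maximum of $\cyc\pi$, hence a pinnacle in $\{n+1,p_1,\ldots,p_{i-1}\}$. Two such pinnacles lying in the same $\cycf(\pi)$-segment at this level are joined cyclically by a run of elements all strictly greater than $p_i$, so no pinnacle of value $\leq p_i$ separates them in the cyclic pinnacle order; after relabeling they therefore also lie in the same $\cycf(\sigma)$-segment at level $k+2-i$. This yields a well-defined surjection from $\cycf(\pi)$-segments at level $p_i+1$ onto $\cycf(\sigma)$-segments at level $k+2-i$, from which the inequality follows.

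For (ii)~$\Rightarrow$~(i), I would construct $\cycf(\pi)$ step by step via the procedure of Proposition~\ref{prop:sum-type}. The hypothesis that some $\pi_*$ has $\dycktype(\cycf(\pi_*))=D$ furnishes a Motzkin path $M_\pi$ with up steps at the prescribed positions $n+1-p_i$ and with $\dyckc(M_\pi)=D$, so in each slot between consecutive up steps the prescribed number of down steps fits within the gap $g_i$. Carry out the construction while maintaining the inductive invariant that, at each pinnacle level $p_i+1$, the cyclic arrangement of $\cycf(\pi)$-segments refines that of $\cycf(\sigma)$-segments at level $k+2-i$, each $\cycf(\sigma)$-segment appearing as a cyclically consecutive block of $\cycf(\pi)$-segments. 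An up step for $p_i$ then places the new singleton at the cyclic position dictated by element $k+1-i$ in $\cycf(\sigma)$; a non-pinnacle step restricts its attachment (for a horizontal step) or its merge (for a down step) to segments lying inside the same block, except that a cross-block merge is performed exactly when the corresponding step of $M$ is itself a down step.

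The main obstacle is closing this induction: one must verify that a valid attachment choice exists at \emph{every} non-pinnacle step. The key quantity is the ``surplus'' $s_{p_i+1}(\cycf(\pi))-s_{k+2-i}(\cycf(\sigma))$, which measures how many additional internal splits the blocks collectively admit. Its evolution across a slot must be tracked carefully: horizontal steps always find a block with at least one segment, intra-block down steps require a block with at least two segments, and inter-block down steps must match exactly the down steps of $M$. The compatibility inequality at the next level $p_{i+1}+1$ supplies precisely the room needed to keep this surplus non-negative throughout, which is what makes the induction go through; cleanly handling the sub-cases according to whether the $i$-th step of $M$ is up, horizontal, or down is the technical heart of the argument.
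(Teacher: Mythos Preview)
Your direction (i)~$\Rightarrow$~(ii) is essentially the paper's argument: restrict $S_{p_i+1}(\cycf(\pi))$ to pinnacles and observe that it refines $S_{k+2-i}(\cycf(\sigma))$ as a set partition, giving the desired inequality.

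For (ii)~$\Rightarrow$~(i), your plan and the paper's diverge. You propose to walk through the Motzkin path $M_\pi=\motzf(\cycf(\pi_*))$ step by step, maintaining a ``block'' invariant (each $\cycf(\pi)$-segment assigned to a $\cycf(\sigma)$-segment) and deciding at each non-pinnacle step whether to merge intra-block or cross-block. The paper instead keeps all horizontal-step choices of $M_\pi$ untouched and determines the down-step choices by reading $M$ rather than $M_\pi$: an up/horizontal/down step of $M$ consumes $0$/$1$/$2$ consecutive down steps of $D$, joining the segment of $p_i$ to the segments dictated by how $k+1-i$ attaches in $\cycf(\sigma)$. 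The only way this can fail is if the $i$-th step of $M$ is reached before the $i$-th up step of $D$ has occurred, and a one-line count (steps $1,\ldots,i-1$ of $M$ consume $i-1-\ell_i$ down steps, while $D$ has $i-1-\ell'_i$ down steps before its $i$-th up step) shows failure is exactly $\ell_i>\ell'_i$, excluded by compatibility. This sidesteps your whole surplus-tracking induction.

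Two concrete gaps in your plan are worth flagging. First, in the construction of Proposition~\ref{prop:sum-type} used here, segments are \emph{free-floating}: an up step just adds a new segment to a set, with no cyclic position. So ``places the new singleton at the cyclic position dictated by $k+1-i$'' and ``cyclically consecutive block of $\cycf(\pi)$-segments'' are not well-defined at intermediate stages; you would need to reformulate the invariant purely as a set partition of the current segment set. Second, ``a cross-block merge is performed exactly when the corresponding step of $M$ is itself a down step'' needs a precise meaning: down steps of $M_\pi$ are indexed by non-pinnacle values, while steps of $M$ are indexed by pinnacles, so there is no canonical correspondence. The paper's device --- reading $M$ and greedily consuming down steps of $D$ --- is exactly what makes this correspondence well-defined and the counting transparent.
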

\begin{proof}
  For simplicity, we take $\cyc{\pi} = \cycf(\pi)$ and $\cyc{\sigma} = \cycf(\sigma)$, and we suppose that $P = \{ p_1 > p_2 > \cdots > p_k \}$. By construction, the pinnacles of $\cyc\pi$ are exactly $n+1, p_1, p_2, \ldots, p_{i-1}$, corresponding to elements $k+1, k, \ldots, k-i+2$ in $\cyc\sigma$, as $\ord(\pi) = \hat\sigma$ and $n+1$ is always a pinnacle in $\cyc\pi$. By the definition of $\motzf$, the starting height of the $i$-th step of $M$ is $s_{k+2-i}(\cyc{\sigma}) - 1$. Similarly, from Lemma~\ref{lem:motzkin-type} and the definition of $\dycktype$, we know that the $i$-th up step of $D$ corresponds to the pinnacle $p_i$, and its starting height is $s_{p_i+1} (\cyc{\pi}) - 1$. Hence, in the following, we may replace (ii) by
  \begin{enumerate}
  \item[(ii)'] For all $1 \leq i \leq k-1$, we have $s_{k+2-i}(\cyc{\sigma}) \leq s_{p_i+1}(\cyc{\pi})$.
  \end{enumerate}

  We start by proving (i) $\Rightarrow$ (ii)'. The elements in each segment in $S_{k+2-i}(\cyc\sigma)$ correspond to pinnacles in $\cyc\pi$. However, if two elements $j, j'$ are not in the same segment of $S_{k+2-i}(\cyc\sigma)$, then there are two elements $m, m' < k+2-i$ that separate $j$ and $j'$ in $\cyc\sigma$, which transposes to two pinnacles $p_{k-m+1}, p_{k-m'+1} \leq p_i$ that separate $p_{k-j+1}$ and $p_{k-j'+1}$ in $\cyc\pi$. It means that $p_{k-j+1}$ and $p_{k-j'+1}$ are not in the same segment in $S_{p_i+1}(\cyc{\pi})$. The restriction of segments in $S_{p_i+1}(\cyc{\pi})$ to pinnacles is thus a refinement of $S_{k+2-i}(\cyc\sigma)$ as set partition, and we have $s_{k+2-i}(\cyc{\sigma}) \leq s_{p_i+1}(\cyc{\pi})$ to conclude (i) $\Rightarrow$ (ii)'.

  To show (ii) $\Rightarrow$ (i), we only need to show a way to construct $\pi$ satisfying (i). Knowing that there is $\pi_* \in \sym_n(P)$ such that $\dycktype(\cycf(\pi_*)) = D$, as in the proof of Proposition~\ref{prop:sum-type}, we may alter any choices in the procedure to construct $\cycf(\pi_*)$ from its Motzkin type $\motzf(\cycf(\pi_*))$ to construct $\cyc\pi$ of the same Motzkin type, thus also with the same Dyck type. We leave choices for horizontal steps unchanged, focusing only on down steps. We now read each step of $M$ and its choice in the construction of $\cyc\sigma$ from $M$, and translate them to choices of down steps in $\motzf(\cycf(\pi_*))$, which are just down steps in $D$. With an abuse of language that will be justified later, given an element or a segment in $\cyc\sigma$, we say that its corresponding segment in $\cyc\pi$ is the one containing the corresponding pinnacles. There are three possible steps in $M$:
  \begin{itemize}
  \item Up step: do nothing;
  \item Horizontal step: suppose that it joins a new element $i$ to an existing segment $w$ in $S_{i+1}(\cyc\sigma)$, then the next down step in $D$ should join the corresponding segments of $i$ and of $w$ in the same way;
  \item Down steps: suppose that it joins a new element $i$ with two segments $w$ and $w'$, forming $wiw'$, then we take the next \emph{two} down steps in $D$ to join the corresponding segments of $i$, $w$ and $w'$ in the same way, with arbitrary order.
  \end{itemize}
  The procedure above is well-defined, as we mimic in $\cyc\pi$ how segments join in $\cyc\sigma$, meaning that the corresponding pinnacles of elements in the same segment of $\cyc\sigma$, once it comes to existence during the reading of $M$, are also in the same segment of $\cyc\pi$ after the translated steps. The only way that the process above fails is when it is about to deal with a new element in $\cyc\sigma$ for the $i$-th step of $M$, but the corresponding pinnacle $p_i$ is not yet introduced by the $i$-th up step of $\motzf(\cyc\pi)$ (thus also that of $D$). Suppose that the starting height of the $i$-th step of $M$ is $\ell_i$ and that of the $i$-th up step of $D$ is $\ell_i'$. Steps in $M$ for elements larger than $i$ correspond by the process above to $i - 1 - \ell_i$ down steps in $D$, as a step in $M$ with height increment $1$, $0$, $-1$ consumes $0$, $1$, $2$ down steps in $D$ respectively. Then, for the $i$-th up step of $D$, there are $i-1-\ell_i'$ down steps before it. Therefore, our process fails for the $i$-th step of $M$ when the first $i-1$ steps in $M$ produces strictly less down steps than those before the $i$-th up step in $D$, that is, when $i - 1 - \ell_i < i - 1 - \ell_i'$. But this is impossible due to (ii). Therefore, our process never fails, and it produces a permutation $\pi$ with $\ord(\pi) = \hat\sigma$.
\end{proof}

Given a pinnacle set $P$, a Dyck path $D$ is a \tdef{admissible Dyck type} of $P$ if there is some $\pi \in \sym_n(P)$ such that $\dycktype(\cycf(\pi)) = D$. By Proposition~\ref{prop:pin-ord-seg}, to obtain $|\ordset(P)|$, we only need to compute all Motzkin paths of length $k-1$ that is compatible with any of the admissible Dyck types. The following characterization of admissible Dyck types of $P$ allows us to focus on only one of them, instead of all, when computing $|\ordset(P)|$.

\begin{prop} \label{prop:dyck-type-cond}
  Let $P = \{ p_1 > p_2 > \cdots > p_k \} \subseteq [n]$ and $D$ an admissible Dyck type of $P$. Let $\ell_i$ be the starting height of the $i$-th up step of $D$. Then for all $2 \leq i \leq k$, we have
  \[
    \ell_i \leq \min\left(\ell_{i-1} + 1, p_i - 3 - 2(k - i) \right).
  \]
\end{prop}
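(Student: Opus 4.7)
The plan is to establish the two components of the minimum separately.

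The bound $\ell_i \leq \ell_{i-1} + 1$ follows immediately from the shape of a Dyck path: between the $(i-1)$-th up step of $D$, which ends at height $\ell_{i-1}+1$, and the $i$-th up step of $D$, which starts at height $\ell_i$, every intermediate step must be a down step (since the up steps of $D$ are numbered in left-to-right order), so $\ell_i \leq \ell_{i-1}+1$.

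For the more substantive bound $\ell_i \leq p_i - 3 - 2(k-i)$, I would first translate it into a statement about segment sets. Fix $\pi \in \sym_n(P)$ with $\dycktype(\cycf(\pi)) = D$, which exists by admissibility, and write $\cyc{\pi} = \cycf(\pi)$. From the construction in Proposition~\ref{prop:sum-type} and Lemma~\ref{lem:motzkin-type}, the $i$-th up step of $D$ marks the insertion of the pinnacle $p_i$, and its starting height equals $\ell_i = s_{p_i+1}(\cyc{\pi}) - 1$. Writing $s := s_{p_i+1}(\cyc{\pi})$, the desired inequality becomes $s \leq p_i - 2(k-i+1)$.

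The main step is a counting argument via ``gaps''. The cyclic positions of $\cyc{\pi}$ with values $\leq p_i$ split into exactly $s$ maximal runs interleaved with the $s$ segments of $S_{p_i+1}(\cyc{\pi})$, and they contain $p_i$ positions in total. Each pinnacle of $\cyc{\pi}$ of value $\leq p_i$, namely one of $p_i, p_{i+1}, \ldots, p_k$ (giving $k-i+1$ in total), must lie in one such gap. The key lemma to prove is that a gap of length $t$ containing $d$ such pinnacles satisfies $t \geq 2d+1$: extending the gap by its two outer cyclic neighbors (both of value $\geq p_i+1$) produces a sequence whose interior local maxima are exactly the $d$ pinnacles in the gap, because the two extremal interior elements are dominated by the outer neighbors and so cannot be local maxima; the standard alternation of local maxima and minima then forces at least $d+1$ interior local minima, totalling $2d+1$ interior elements. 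Summing this inequality across all $s$ gaps gives $p_i \geq s + 2(k-i+1)$, which rearranges to $s \leq p_i - 2(k-i+1)$ and then to the stated bound.

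The main obstacle I expect is the careful bookkeeping in the ``gap'' argument, particularly checking that short gaps (lengths $1$ or $2$) force $d=0$ and still satisfy $t \geq 1 = 2 \cdot 0 + 1$, so they are absorbed without special treatment; and verifying the identification of the pinnacles of $\cyc{\pi}$ with the interior local maxima of the extended-gap sequences, which relies crucially on the strict dominance of the outer neighbors to rule out boundary pinnacles.
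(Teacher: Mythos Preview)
Your argument is correct, but it takes a different route from the paper. The paper stays entirely at the level of the Dyck path for the second bound: from the $i$-th up step onward (included), $D$ has $k-i+1$ up steps and hence, since it must return to height $0$, exactly $\ell_i + (k-i+1)$ down steps; these $\ell_i + 2(k-i+1)$ non-horizontal steps come from distinct elements of $\{2,\ldots,p_i\}$ in the underlying Motzkin path, so $\ell_i + 2(k-i+1)\le p_i-1$ in one line. You instead unwind the inequality on the permutation side, translating $\ell_i$ back to $s_{p_i+1}(\cyc\pi)-1$ and invoking the alternation of local maxima and minima inside each gap. The two counts are morally the same---your ``$2d+1$ extrema per gap'' is exactly the pinnacles-plus-valleys tally that the paper reads off as up-plus-down steps---but the path-side version sidesteps all of the gap bookkeeping you flag as the main obstacle. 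Your approach has the compensating merit of being self-contained at the permutation level, with no appeal to the Motzkin/Dyck encoding beyond the identification $\ell_i = s_{p_i+1}(\cyc\pi)-1$.
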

\begin{proof}
  It is clear that $\ell_i \leq \ell_{i-1} + 1$, as there is no up step between the $(i-1)$-st and the $i$-th up step. Now for the other inequality, as $D$ is admissible, there is some $\pi \in \sym_n(P)$ such that $\dycktype(\cycf(\pi)) = D$. Consider all the steps of $D$ after the $i$-th up step (itself included), among them $k-i+1$ are up steps, and thus $\ell_i + k - i + 1$ are down steps, as $D$ is a Dyck path. By the definition of $\dycktype$ and $\motzf$, these steps are from distinct elements from $p_i$ down to $2$. Hence, we must have more elements than down steps, meaning that $\ell_i + 2(k-i+1) \leq p_i - 1$, thus $\ell_i \leq p_i - 3 - 2(k-i)$.
\end{proof}

As a simple corollary, we recover the characterization of admissible pinnacle sets (\textit{i.e.}, pinnacle sets $P$ such that $\sym_n(P)$ is not empty) in \cite{pin-2018}, here stated and strengthened for our need.

\begin{coro}[See Proposition~2.3 in \cite{pin-2018}] \label{coro:admissible-pin}
  Let $P = \{ p_1 > p_2 > \cdots > p_k \} \subseteq [n]$. Then $\sym_n(P)$ is not empty if and only if $p_i \geq 3 + 2(k-i)$ for all $2 \leq i \leq k$.

  Furthermore, when this condition is satisfied, there is an admissible Dyck type $D_P$ for $P$, with all $\ell_i$ reaching the maximum. $D_P$ can be constructed as follows: for each $2 \leq i \leq k$, we put $\max(0, \ell_{i-1} - p_i + 2 + 2(k-i))$ down steps between the $(i-1)$-st up step and the $i$-th one, and we add the correct number of down steps at the end to make $D_P$ a Dyck path. For $D_P$, we have $\ell_i = \min\left(\ell_{i-1} + 1, p_i - 3 - 2(k - i) \right)$.
\end{coro}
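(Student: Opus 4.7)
The plan is to prove the two implications of the equivalence separately, using Proposition~\ref{prop:dyck-type-cond} for necessity and an explicit construction of $D_P$ for sufficiency.

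For the forward direction, I would start from any $\pi \in \sym_n(P)$ and consider $D := \dycktype(\cycf(\pi))$, which is by definition an admissible Dyck type of $P$. Applying Proposition~\ref{prop:dyck-type-cond} to $D$ gives $\ell_i \leq p_i - 3 - 2(k-i)$ for $2 \leq i \leq k$. Combining this with $\ell_i \geq 0$ (since all heights of a Dyck path are non-negative), the desired inequality $p_i \geq 3 + 2(k-i)$ follows immediately on the same range of $i$.

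For the reverse direction, I would exhibit $D_P$ as an admissible Dyck type of $P$, which implies $\sym_n(P) \neq \varnothing$. The construction is iterative: set $\ell_1 = 1$, and for each $i$ from $2$ to $k$ insert $d_{i-1} = \max(0, \ell_{i-1} - p_i + 2 + 2(k-i))$ down steps before the $i$-th up step, then close the path with $d_k = \ell_k$ down steps at the end. I would then proceed by induction on $i$ to verify simultaneously that $\ell_i = \min(\ell_{i-1}+1, p_i - 3 - 2(k-i))$ and that $\ell_i \geq 0$, the latter being guaranteed by the hypothesis $p_i \geq 3 + 2(k-i)$. To conclude that $D_P$ is actually realized by a permutation in $\sym_n(P)$, I would invoke Theorem~\ref{thm:dyck-weight}: since all weights $w_\dyck(n, P, D)$ are non-negative, it suffices to show $w_\dyck(n, P, D_P) > 0$, which by \eqref{eq:dyck-weight}--\eqref{eq:drun-weight} reduces to the gap constraints $d_j \leq g_j = p_j - p_{j+1} - 1$ for each $j$. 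A direct computation using $d_j = \ell_j + 1 - \ell_{j+1}$ together with the inductive bound $\ell_j \leq p_j - 3 - 2(k-j)$ established above yields each such inequality.

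The main technical hurdle is the inductive bookkeeping: one must verify carefully that at each step the construction yields exactly the minimum of $\ell_{i-1}+1$ and $p_i - 3 - 2(k-i)$, and that the upper bound $\ell_i \leq p_i - 3 - 2(k-i)$ propagates down to give the gap constraints $d_j \leq g_j$. Once this chain is established, both the Dyck-path validity of $D_P$ and its admissibility as a Dyck type follow uniformly from the single hypothesis $p_i \geq 3 + 2(k-i)$.
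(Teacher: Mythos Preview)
Your forward direction is the same as the paper's. For the reverse direction you argue admissibility of $D_P$ differently: the paper expands $D_P$ into a Motzkin path lying in $\motz_{n-1,P}$ and invokes Proposition~\ref{prop:sum-type} to produce a permutation, whereas you show $w_\dyck(n,P,D_P)>0$ by verifying the gap constraints $d_j\le g_j$ and then appeal to the weight formulas of Section~\ref{sec:dyck}. This is a legitimate alternative (really it is Proposition~\ref{prop:dyck-weight} rather than Theorem~\ref{thm:dyck-weight} that does the work, since it identifies $w_\dyck(n,P,D_P)$ directly as a count of permutations), and your inductive bound $\ell_j\le p_j-3-2(k-j)$ does indeed yield $d_j\le g_j$ after a short case split on which argument realizes the minimum defining $\ell_{j+1}$. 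Two notational slips: in the convention of Section~\ref{sec:pin-order}, $\ell_i$ is the \emph{starting} height of the $i$-th up step, so $\ell_1=0$ rather than $1$, and correspondingly the closing run has $d_k=\ell_k+1$ down steps.

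There is, however, a genuine omission. The corollary also asserts that $D_P$ has \emph{maximal} $\ell_i$ among all admissible Dyck types of $P$, and this maximality is precisely what Theorem~\ref{thm:ordset-chara} relies on afterwards. You verify only the formula $\ell_i=\min(\ell_{i-1}+1,\,p_i-3-2(k-i))$ for the specific path $D_P$; you never compare $D_P$ to other admissible Dyck types. The paper fills this in by a short induction: for any admissible $D'$ with starting heights $\ell'_i$, Proposition~\ref{prop:dyck-type-cond} gives $\ell'_i\le\min(\ell'_{i-1}+1,\,p_i-3-2(k-i))$, and since $\ell'_1=\ell_1=0$ one obtains $\ell'_i\le\ell_i$ for all $i$ by monotonicity of the recurrence. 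You should add this step.
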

\begin{proof}
  For the ``only if'' part, suppose that there is an index $i$ with $p_i < 3 + 2(k-i)$. If $\sym_n(P)$ contains some $\pi$, we take $D = \dycktype(\cycf(\pi))$, and by Proposition~\ref{prop:dyck-type-cond}, the starting height $\ell_i$ of the $i$-th step of $D$ satisfies $\ell_i \leq p_i - 3 - 2(k-i) < 0$, which is impossible. Therefore, such an index $i$ cannot exist.

  For the ``if'' part, we first show that $D_P$ constructed above is an admissible Dyck type. As $p_i \geq 3+2(k-i)$, the number of down steps between the $(i-1)$-st and the $i$-th up steps is at most $\ell_{i-1} + 1$, and accounting for the $(i-1)$-st up step, we have $\ell_i \geq 0$, showing that $D_P$ is indeed a Dyck path. To show that $D_P$ is admissible, we expand $D_P$ into a Motzkin path $M$ that satisfies the condition in Lemma~\ref{lem:motzkin-type}. This is always possible, as $p_i \geq 3+2(k-i)$ implies that there are enough elements for down steps at each time, as in the proof of Proposition~\ref{prop:dyck-type-cond}. With $M$ a valid Motzkin type, by Proposition~\ref{prop:sum-type}, we see that $\sym_n(P)$ is not empty.

  The equality for $\ell_i$ holds by construction. Now, let $D'$ be an admissible Dyck type of $P$, with $\ell'_i$ the starting height of its $i$-th step. We show by induction that $\ell'_i \leq \ell_i$. This holds for $i=1$ as $\ell'_i = \ell_i = 0$. Suppose that $\ell'_{i-1} \leq \ell_{i-1}$, then by Proposition~\ref{prop:dyck-type-cond},
  \[
    \ell'_i \leq \min\left(\ell'_{i-1} + 1, p_i - 3 - 2(k - i) \right) \leq \min\left(\ell_{i-1} + 1, p_i - 3 - 2(k - i) \right) = \ell_i.
  \]
  We thus conclude the maximality of $D_P$.
\end{proof}

Given $P \subseteq [n]$ with $\sym_n(P)$ not empty, we call the Dyck path $D_P$ defined in Corollary~\ref{coro:admissible-pin} to be the \tdef{maximal Dyck type} of $P$. We can now characterize permutations in $\ordset(P)$ in a simple way.

\begin{thm} \label{thm:ordset-chara}
  Given $P \subseteq [n]$ with $|P| = k$. A permutation $\sigma$ is in $\ordset(P)$ if and only if the Motzkin type of $\cycf(\hat\sigma)$ is compatible (as defined in Proposition~\ref{prop:pin-ord-seg}) with the maximal Dyck type $D_P$ of $P$. We thus have
  \[
    |\ordset(P)| = \sum_{\substack{M \in \motz_{k-1} \\ M \text{ compatible with } D_P}} w_\motz(M).
  \]
\end{thm}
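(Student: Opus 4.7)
The plan is to combine the characterization of Proposition~\ref{prop:pin-ord-seg} with the maximality of $D_P$ established in Corollary~\ref{coro:admissible-pin}, and then apply Proposition~\ref{prop:sum-type} for the counting step.

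First I would unfold the definitions: $\sigma \in \ordset(P)$ iff there exists some $\pi \in \sym_n(P)$ with $\ord(\pi) = \sigma$. Since complementation is an involution on $\sym_k$, we may write $\sigma = \hat{\hat\sigma}$ and set $M = \motzf(\cycf(\hat\sigma))$. Applying Proposition~\ref{prop:pin-ord-seg} with $\hat\sigma$ in place of its $\sigma$, this rephrases membership in $\ordset(P)$ as: \emph{there exists some admissible Dyck type $D$ of $P$ with $M$ compatible with $D$}. Thus $|\ordset(P)|$ counts those $\sigma \in \sym_k$ whose associated $M$ is compatible with at least one admissible Dyck type of $P$.

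The heart of the argument is to collapse this existential quantifier to the single path $D_P$. Let $\ell_i^D$ denote the starting height of the $i$-th up step of a Dyck path $D$. By definition, $M$ is compatible with $D$ iff the starting height of the $i$-th step of $M$ is at most $\ell_i^D$ for each $1 \leq i \leq k-1$. Corollary~\ref{coro:admissible-pin} guarantees $\ell_i^D \leq \ell_i^{D_P}$ for every admissible $D$, so compatibility with any such $D$ immediately yields compatibility with $D_P$. Conversely, $D_P$ is itself admissible, so compatibility with $D_P$ already witnesses the existence of a suitable $D$. This settles the characterization half of the theorem.

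For the counting formula, note that $\sigma \mapsto \cycf(\hat\sigma)$ is a bijection from $\sym_k$ to $\cyc{\sym_{k+1}}$. Grouping the image by Motzkin type $M \in \motz_{k-1}$ and invoking Proposition~\ref{prop:sum-type}, which states that each $M$ is the Motzkin type of exactly $w_\motz(M)$ cyclic permutations in $\cyc{\sym_{k+1}}$, the restriction to $M$ compatible with $D_P$ yields the stated sum. The only delicate point in the whole proof is the reduction to $D_P$ via maximality; once that is in place, everything else is routine bookkeeping.
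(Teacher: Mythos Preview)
Your proof is correct and follows essentially the same approach as the paper: both directions of the characterization are obtained by combining Proposition~\ref{prop:pin-ord-seg} with the maximality of $D_P$ from Corollary~\ref{coro:admissible-pin}, and the counting formula comes from Proposition~\ref{prop:sum-type}. Your write-up is slightly more explicit about the bijection $\sigma \mapsto \cycf(\hat\sigma)$ in the counting step, but the argument is the same.
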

\begin{proof}
  The ``if'' part follows directly from Proposition~\ref{prop:pin-ord-seg} by taking $D = D_P$ therein, as $\sigma \mapsto \hat\sigma$ is involutive. For the ``only if'' part, as $\sigma \in \ordset(P)$, there exists $\pi \in \sym_n(P)$ such that $\ord(\pi) = \sigma$. By Proposition~\ref{prop:pin-ord-seg}, the Motzkin type of $\cycf(\hat\sigma)$ is compatible with the Dyck type of $\cycf(\pi)$, meaning that it is also compatible with $D_P$, according to Corollary~\ref{coro:admissible-pin} and the definition of compatibility. By accounting for all possible $M$, using Proposition~\ref{prop:sum-type}, we have the formula of $|\ordset(P)|$.
\end{proof}

Following \cite{rusu-tenner}, we say that a pinnacle set $P$ with $k = |P|$ is \tdef{maximally admissible} if $\ordset(P) = \sym_k$. The following corollary recovers a result in \cite{rusu-tenner}.

\begin{coro}[Corollary~4.6 in \cite{rusu-tenner}]
  A pinnacle set $P = \{ p_1 > p_2 > \cdots > p_k \} \subseteq [n]$ is maximally admissible if and only if $p_i \geq \min(2k - i + 2, 3(k+1-i))$ for all $2 \leq i \leq k - 1$.
\end{coro}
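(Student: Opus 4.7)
The plan is to combine Theorem~\ref{thm:ordset-chara} with an elementary analysis of the maximal height reachable in a Motzkin path of fixed length. Since every Motzkin path $M \in \motz_{k-1}$ has strictly positive weight $w_\motz(M)$ by Proposition~\ref{prop:sum-type}, Theorem~\ref{thm:ordset-chara} gives $|\ordset(P)| = k!$ if and only if every such $M$ is compatible with $D_P$. Writing $\ell_i$ for the starting height of the $i$-th up step of $D_P$, the compatibility condition of Proposition~\ref{prop:pin-ord-seg} then turns maximal admissibility into the system of inequalities $c_i \le \ell_i$ for $1 \le i \le k-1$, where $c_i$ denotes the maximum, over all $M \in \motz_{k-1}$, of the starting height of the $i$-th step of $M$.

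Next I would compute $c_i = \min(i-1,\, k-i)$: the height at position $i-1$ is bounded by $i-1$ from the unit step increments and by $k-i$ from the need for the remaining $k-i$ steps to bring the path back to $0$, and both bounds are attained by a tent-shaped path. I would then plug in the explicit recursion $\ell_1 = 0$ and $\ell_i = \min(\ell_{i-1}+1,\, p_i - 3 - 2(k-i))$ from Corollary~\ref{coro:admissible-pin}. A direct check shows $c_i \le c_{i-1}+1$ for all $i$, so by induction the inequality $c_i \le \ell_{i-1}+1$ comes for free, and the system $c_i \le \ell_i$ therefore reduces to the single family of inequalities
\[
  p_i \ge c_i + 3 + 2(k-i) \quad \text{for all } 1 \le i \le k-1.
\]

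A routine case split then finishes the argument: when $i \le (k+1)/2$, $c_i = i-1$ and the bound becomes $2k-i+2$; when $i \ge (k+1)/2$, $c_i = k-i$ and the bound becomes $3(k+1-i)$. The crossover of the two linear expressions occurs exactly at $i = (k+1)/2$, so in both regimes the bound equals $\min(2k-i+2,\, 3(k+1-i))$. The case $i=1$ is vacuous since $c_1 = 0$, leaving exactly the stated range $2 \le i \le k-1$. The only nonroutine step is the first one---recognizing that maximal admissibility is equivalent to the pointwise conditions $c_i \le \ell_i$, which crucially uses positivity of Motzkin weights to pass from the weighted sum in Theorem~\ref{thm:ordset-chara} to a set-theoretic statement about compatibility; everything downstream is elementary manipulation of the min function.
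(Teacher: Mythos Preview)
Your argument is correct and essentially the same as the paper's. The paper phrases the reduction to $\ell_i \ge \min(i-1,k-i)$ by pointing to the single ``tent-shaped'' Motzkin path as the extremal witness, whereas you compute $c_i = \min(i-1,k-i)$ directly as the pointwise maximum of starting heights; both routes land on the same inequality and finish with the same induction using the recursion for $\ell_i$ from Corollary~\ref{coro:admissible-pin}. One cosmetic point: your phrase ``$c_i \le \ell_{i-1}+1$ comes for free'' silently invokes the inductive hypothesis $c_{i-1} \le \ell_{i-1}$, so make explicit that you are proving the equivalence of the two systems by induction on $i$; and ``vacuous since $c_1 = 0$'' should really read ``vacuous since $c_1 = 0 = \ell_1$'', as it is the coincidence with $\ell_1$ (not the value of $c_1$ alone) that disposes of the case $i=1$.
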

\begin{proof}
  From Theorem~\ref{thm:ordset-chara}, we know that $P$ is maximally admissible if and only if $D_P$ is compatible with all possible Motzkin paths of length $k-1$. By definition, it is equivalent to $D_P$ being compatible with the Motzkin path that starts with $\lfloor \frac{k-1}{2} \rfloor$ up steps and ends with $\lfloor \frac{k-1}{2} \rfloor$ down steps. This is equivalent to say that $\ell_i \geq \min(i - 1, k - i)$ for $\ell_i$ for $D_P$ defined in Corollary~\ref{coro:admissible-pin}.

  For the ``only if'' part, it is clear that, for $2 \leq i \leq k - 1$, we have $p_i - 3 - 2(k-i) \geq \min(i - 1, k - i)$, which implies what we want. For the ``if'' part, we proceed by induction on $i$. The case $i = 1$ is trivial, as we have $\ell_1 = 0 \geq \min(0, k)$. Suppose that $\ell_{i-1} \geq \min(i - 2, k - i + 1)$, then by Corollary~\ref{coro:admissible-pin} and the condition on $p_i$, we have
  \[
    \ell_i = \min(\ell_{i-1} + 1, p_i - 3 - 2(k-i)) \geq \min(\min(i-1, k-i+2), \min(i-1, k-i)).
  \]
  We thus have $\ell_i \geq \min(i-1, k-i)$ to conclude the induction.
\end{proof}

Using Theorem~\ref{thm:ordset-chara}, we propose the following recurrence for $|\ordset(P)|$ that avoids the enumeration of the exponentially many Motzkin paths.

\begin{prop} \label{prop:ordset-rec}
  Given $P = \{ p_1 > p_2 > \cdots > p_k \} \subseteq [n]$, we define $\ell_1 = 0$ and $\ell_i = \min(\ell_{i-1} + 1, p_i - 3 - 2(k-i))$ for $2 \leq i \leq k$. We then define $b_P(i,j)$ for $-1 \leq i, j \leq k$ as follows. When $j > \ell_{i-1}$ or $j < 0$, we have $b_P(i, j) = 0$. Otherwise, we have
  \begin{align*}
    b_P(0, 0) &= 1, \\
    b_P(i + 1, j) &= b_P(i, j - 1) + 2(j+1)b_P(i, j) + (j+1)(j+2)b_P(i, j + 1).
  \end{align*}
  Then we have $|\ordset(P)| = b_P(k-1, 0)$.
\end{prop}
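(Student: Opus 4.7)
The plan is to show by induction on $i$ that $b_P(i,j)$ equals the total $w_\motz$-weight of Motzkin path prefixes of length $i$ ending at height $j$ that are compatible with $D_P$ in the sense of Proposition~\ref{prop:pin-ord-seg}. Once this combinatorial interpretation is established, specializing to $(i,j) = (k-1, 0)$ and invoking Theorem~\ref{thm:ordset-chara} yields $|\ordset(P)| = b_P(k-1,0)$, since every Motzkin path of length $k-1$ ending at height $0$ is counted there with its $w_\motz$-weight.

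The base case $b_P(0,0) = 1$ records the unique empty prefix, which vacuously satisfies every compatibility condition. For the inductive step, I would decompose a compatible length-$(i+1)$ prefix ending at height $j$ according to its last step. Removing that step leaves a compatible length-$i$ prefix whose ending height is $j-1$, $j$, or $j+1$ in the respective cases of an up, horizontal, or down step; the multiplicative step weights are $1$, $2(j+1)$, and $(j+1)(j+2)$, matching $w_\motz$ as defined in Section~\ref{sec:motz}. Summing over the three cases reproduces the recurrence exactly.

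The boundary conditions forcing $b_P(i,j) = 0$ outside the admissible range for $j$ encode the compatibility constraint of Proposition~\ref{prop:pin-ord-seg} at the recurrence level: a prefix whose ending height is too large for its position cannot be continued into any Motzkin path compatible with $D_P$, and so must contribute nothing to the sum. Since an up, horizontal, or down extension of a length-$i$ prefix has its starting height determined by that ending height, zeroing out impermissible $b_P(i,j)$ automatically prunes every incompatible continuation at the next step of the recurrence. The only delicate point I anticipate is verifying that the index of $\ell$ appearing in the zeroing condition matches the compatibility bound $h_{i'-1} \le \ell_{i'}$ under the shift by one inherent in extending a length-$i$ prefix to length $i+1$; once this alignment is tracked carefully the induction closes, and the formula $|\ordset(P)| = b_P(k-1, 0)$ follows.
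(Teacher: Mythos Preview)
Your proposal is correct and follows essentially the same approach as the paper: interpret $b_P(i,j)$ as the total $w_\motz$-weight of Motzkin prefixes of length $i$ ending at height $j$ that are compatible with $D_P$, obtain the recurrence by decomposing on the last step, and conclude via Theorem~\ref{thm:ordset-chara}. Your write-up is in fact more explicit than the paper's (which simply asserts the combinatorial interpretation and refers back to the analogous arguments for Theorem~\ref{thm:weight-rec} and Proposition~\ref{prop:q-rec}), and you correctly flag the indexing alignment between the zeroing condition and the compatibility bound as the only point requiring care.
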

\begin{proof}
  The proof is similar to that of Theorem~\ref{thm:weight-rec} and Proposition~\ref{prop:q-rec}. We first extend the weight $w_\motz$ to prefixes of Motzkin paths. Then, the quantity $b_P(i, j)$ stands for the total weight of prefixes of Motzkin paths compatible with $D_P$ ending at $(i, j)$, where $D_P$ is defined in Corollary~\ref{coro:admissible-pin}. At $b_P(k-1,0)$, we thus have the total weights of Motzkin paths compatible with $D_P$, and we conclude by Theorem~\ref{thm:ordset-chara}.
\end{proof}

The recurrence in Proposition~\ref{prop:ordset-rec} gives a concrete algorithm for Proposition~\ref{prop:pin-order-algo}.
  
\begin{proof}[of Proposition~\ref{prop:pin-order-algo}]
  Let $k = |P|$. Computing $|\ordset(P)|$ using the recurrence in Proposition~\ref{prop:ordset-rec} needs two steps: computing $\ell_i$ for $i \in [k]$, which takes $O(k)$ operations; computing $b_P(i,j)$ for $i, j \in [k]$, which takes $O(k^2)$ operations.
\end{proof}

Using the algorithm above, we are able to compute $|\ordset(P)|$ for most practical purposes. We observe that, for fixed $k = |P|$, there is a finite number of possible non-zero values for $|\ordset(P)|$. We denote this number by $\alpha_k$. A natural bound of $\alpha_k$ is the $(k-1)$-st Motzkin number, as the restriction imposed by $D_P$ essentially means summing over the weights of all Motzkin paths of length $k-1$ below a given ``ceiling'', which is also a Motzkin path, when computing $|\ordset(P)|$. However, this bound is not tight. For instance, a Motzkin path and its mirror, as ceilings, lead to the same weighted sum. An exhaustive computation shows that, for $k$ from $1$ to $19$, the values of $\alpha_k$ are:
\[
  1, 1, 2, 3, 6, 10, 21, 38, 86, 173, 412, 926, 2331, 5713, 14981, 38750, 104907, 279344, 769429
\]
At the time of writing, this sequence is not yet on the \textit{Online Encyclopedia of Integer Sequences}~\cite{oeis}. It may thus be interesting to study $(\alpha_k)_{k \geq 1}$. A precise description of $\alpha_k$ may be complicated, as it involves various symmetries and coincidences due to explicit values of the weights. However, we may expect $(\alpha_k)_{k \geq 1}$ to have the same asymptotic behavior as Motzkin numbers.

\begin{open}
  Does $(\alpha_k)_{k \geq 1}$ grow exponentially with the growth constant $3$, the same as that of Motzkin numbers?
\end{open}

For a Sagemath worksheet containing simple implementations of the algorithms in this article, see \url{https://igm.univ-mlv.fr/~wfang/code/Pinnacle-Code.ipynb}.

\bibliographystyle{abbrvnat}
\bibliography{pinnacle}

\end{document}